  \newenvironment{note}[1][Note]
   {\bigskip\begin{center}\begin{boxedminipage}{4.5in}\setlength{\parindent}{1em}\noindent\textbf{#1. }}
   {\end{boxedminipage}\end{center}\bigskip}
  \def\NZQ{\mathbb}               
  \def\QQ{{\NZQ Q}}
  \def\RR{{\NZQ R}}
  \def\CC{{\NZQ C}}
  \def\FF{{\NZQ F}}
  \def\frk{\mathfrak}               
  \def\Pp{{\frk P}}
  \def\mm{{\frk m}}
  \def\nn{{\frk n}}
  \def\Pp{{\frk P}}
  \def\Kp{{\frk K}}
  \def\Phi{{\frk N}}
  \def\xb{{\bold x}}
  \def\opn#1#2{\def#1{\operatorname{#2}}} 
  \opn\chara{char} \opn\length{\ell} \opn\pd{pd} \opn\rk{rk}
  \def\projdim{\mathrm{pd}} \opn\injdim{inj\,dim} \opn\rank{rank}
  \def\apd{\mathrm{apd}} \opn\injdim{inj\,dim} \opn\rank{rank}
  \opn\depth{depth} \opn\grade{grade} \opn\height{height}
  \opn\embdim{emb\,dim} \opn\codim{codim}
  \opn\Tr{Tr} \opn\bigrank{big\,rank}
  \opn\superheight{superheight}\opn\lcm{lcm}
  \opn\trdeg{tr\,deg}
  \opn\reg{reg} \opn\lreg{lreg} \opn\ini{in} \opn\lpd{lpd}
  \opn\size{size}\opn{\mult}{mult}
  \opn\div{div} \opn\Div{Div} \opn\cl{cl} \opn\Cl{Cl}
  \opn\Spec{Spec} \opn\Supp{Supp} \opn\supp{supp} \opn\Sing{Sing}
  \opn\Ass{Ass} \opn\Min{Min}
  \opn\Ann{Ann} \opn\Rad{Rad} \opn\Soc{Soc}
  \opn\Syz{Syz} \opn\Im{Im} \opn\Ker{Ker} \opn\Coker{Coker}
  \opn\Am{Am} \opn\Hom{Hom} \opn\Tor{Tor} \opn\Ext{Ext}
  \opn\End{End} \opn\Aut{Aut} \opn\id{id}
  \opn\nat{nat}
  \opn\pff{pf}
  \opn\Pf{Pf} \opn\GL{GL} \opn\SL{SL} \opn\mod{mod} \opn\ord{ord}
  \opn\Gin{Gin}
  \opn\Hilb{Hilb}\opn\adeg{adeg}\opn\std{std}\opn\ip{infpt}
  \opn\Pol{Pol}
  \opn\sat{sat}
  \opn\Var{Var}
  \opn\Gen{Gen}
  \opn\aff{aff} \opn\con{conv} \opn\relint{relint} \opn\st{st}
  \opn\lk{lk} \opn\cn{cn} \opn\core{core} \opn\vol{vol}
  \opn\link{link} \opn\star{star}
  \opn\gr{gr}
  \def\pot#1#2{#1[\kern-0.28ex[#2]\kern-0.28ex]}
  \opn\dirlim{\underrightarrow{\lim}}
  \opn\inivlim{\underleftarrow{\lim}}
  \let\iso=\cong
  \let\Dirsum=\bigoplus
  \let\to=\rightarrow
  \def\Implies{\ifmmode\Longrightarrow \else
        \unskip${}\Longrightarrow{}$\ignorespaces\fi}
  \def\implies{\ifmmode\Rightarrow \else
        \unskip${}\Rightarrow{}$\ignorespaces\fi}
  \def\iff{\ifmmode\Longleftrightarrow \else
        \unskip${}\Longleftrightarrow{}$\ignorespaces\fi}
  \newtheorem{Theorem}{Theorem}[section]
  \newtheorem{Lemma}[Theorem]{Lemma}
  \newtheorem{Corollary}[Theorem]{Corollary}
  \newtheorem{Proposition}[Theorem]{Proposition}
  \newtheorem{Remark}[Theorem]{Remark}
  \newtheorem{Example}[Theorem]{Example}
  \newtheorem{Conjecture}[Theorem]{Conjecture}
  \let\epsilon\varepsilon
  \let\phi=\varphi
  \let\kappa=\varkappa
  \def\qed{\ifhmode\textqed\fi
      \ifmmode\ifinner\quad\qedsymbol\else\dispqed\fi\fi}
  \def\textqed{\unskip\nobreak\penalty50
       \hskip2em\hbox{}\nobreak\hfil\qedsymbol
       \parfillskip=0pt \finalhyphendemerits=0}
  \def\dispqed{\rlap{\qquad\qedsymbol}}
  \opn\dis{dis}
  \def\pnt{{\raise0.5mm\hbox{\large\bf.}}}
  \opn\Lex{Lex}
\begin{document}

  \title{The Betti polynomials of powers of an ideal}

  \author{ J\"urgen Herzog  and Volkmar Welker}
  \subjclass{Primary: 13A30 Secondary: 13D45}

   \address{J\"urgen Herzog, Fachbereich Mathematik, Universit\"at Duisburg-Essen, Campus Essen, 45117
            Essen, Germany} \email{juergen.herzog@uni-essen.de}

   \address{Volkmar Welker, Philipps-Universit\"at Marburg, Fachbereich Mathematik und Informatik,
            35032 Marburg, Germany} \email{welker@mathematik.uni-marburg.de}


   \begin{abstract}
      For an ideal $I$ in a regular local ring or a graded ideal $I$ in the polynomial ring we study the limiting
      behavior of $\beta_i(S/I^k)=\dim_K\Tor_i^S(S/\mm,S/I^k)$ as $k$ goes to infinity. By
      Kodiyalam's result it is known that $\beta_i(S/I^k)$ is a polynomial for large $k$. We call
      these polynomials the Kodiyalam polynomials and encode the limiting behavior in their generating
      polynomial. It is shown that the limiting behavior depends only on the coefficients on the
      Kodiyalam polynomials in the highest possible degree. For these we exhibit lower bounds in special
      cases and conjecture that the bounds are valid in general. We also show that the Kodiyalam polynomials have weakly
      descending degrees and identify a situation where the polynomials have all highest possible
      degree.
   \end{abstract}

  \maketitle

  \section{Introduction}

    Let $S$ be either a regular local ring with maximal ideal $\mm$ and residue class field $K$ or a polynomial ring
    over $K$ with maximal graded ideal $\mm$. We assume that $\dim S=n$.  Furthermore, let $I$ be a proper
    (graded) ideal in $S$. In his paper \cite{Kodiyalam1993} Kodiyalam proved that
    \[
       \beta_i(S/I^k)=\dim_K\Tor_i^S(S/\mm,S/I^k)
    \]
    as a function of $k$ is a polynomial function of degree $\leq \ell(I)-1$ for $k\gg 0$. Here $\ell(I)$ denotes the
    {\em analytic spread} of $I$, that is, the Krull-dimension of the fiber $R(I)/\mm R(I)$ of the Rees algebra
    $R(I)=\Dirsum_{k\geq 0}I^kt^k$. It is known and easy to prove that $\height(I)\leq \ell(I)\leq \dim S$.

    We denote by $\Pp_i(I)$ the polynomial with $\Pp_i(I)(k)=\beta_i(S/I^k)$ for $k\gg 0$. and call the polynomials
    $\Pp_0(I), \Pp_1(I),\ldots, \Pp_n(I)$ the {\em Kodiyalam polynomials} of $I$. Note that $\Pp_0(I)=1$.

    It is an immediate consequence of Kodiyalam's result, see  Remark \ref{projdim}, that the projective dimension
    $\projdim(S/I^k)$ of $S/I^k$ stabilizes for $k \gg 0$. Indeed this fact was proved by different means first by
    Brodmann \cite{Brodmann1979}. Note, that Brodmann's result was formulated in terms of the depth rather than the
    projective dimension. We write $\apd(I)$ for $\lim_{k \rightarrow \infty} \projdim(S/I^k)$ and call
    $\apd(I)$ the {\it asymptotic projective dimension} of $I$.

    In this paper we are interested in the limiting behavior of the polynomial $$\Pp(I)(k,t)  = \sum_{i=0}^{\apd(I)}
    \Pp_{i}(I)(k)t^{\apd(I)-i}$$ as $k$ goes to infinity.
    Clearly, at least $\Pp_1(I)(k)$ goes to infinity if $\ell(I) \neq 1$.
    Indeed, in Proposition \ref{first} we show that
    $\ell(I) -1 = \deg \Pp_1(I)\geq \deg \Pp_2(I)\geq \ldots \geq \deg \Pp_{\apd(I)}(I)$.
    In the proof of Proposition \ref{first}, essentially following the ideas by Kodiyalam \cite{Kodiyalam1993},
    we identify $\Pp_i(I)$ as the Hilbert polynomial of the some
    finitely generated module. Therefore, the leading coefficient of $\Pp_i(I)$ is of the form $k_i/d_i!$
    where $d_i=\deg \Pp_i(I)$.  By $\Kp(I)$ we denote $\max \{ i~|~d_i = \ell(I) -1 \}$.
    Note, that the preceding facts imply that $k_i$ is the multiplicity of a finitely generated module.

    We show that the limiting behavior for $k \rightarrow \infty$ of
    $\Pp(I)(k,t)$ is up to convergence rate completely determined by the polynomial
    $\sum_{i=1}^{\Kp(I)} k_i \cdot t^{\apd(I)-i}$.
    More precisely:

     \begin{Theorem} \label{limittheorem}
       Let $I$ be a (graded) ideal in $S$ such that $\ell(I) \geq 2$. Let $\alpha_1, \ldots, \alpha_{\Kp(I)-1}$
       be the roots of the polynomial $\displaystyle{\sum_{i=1}^{\Kp(I)} k_i \cdot t^{\apd(I)-i}}$.
       Then for $1 \leq i \leq \apd(I)$ there are sequences
       $(\gamma_k^{(i)})_{k \geq 1}$ of complex numbers, such
       that after suitable numbering:
       \begin{itemize}
         \item[(i)] $\displaystyle{\prod_{i=1}^{\apd(I)}} (t - \gamma_k^{(i)}) = \Pp(I)(k,t)$ for all $k \geq 1$.
         \item[(ii)] $\gamma_k^{(i)} \rightarrow \alpha_i$, $1 \leq i \leq \apd(I) -1$, for $k \rightarrow \infty$.
         \item[(iii)] $\gamma_k^{(\apd -1)} = \alpha_{\apd-1} = -1$, for all $k \geq 1$.
         \item[(iv)] $\gamma_k^{(\apd(I))} \in \RR$ for $k \gg 0$ and $\gamma_k^{(\apd(I))} \rightarrow -\infty$ for
             $k \rightarrow \infty$.
       \end{itemize}
    \end{Theorem}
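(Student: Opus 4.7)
The fundamental theorem of algebra provides the factorization in (i), since $\Pp_0(I)=1$ makes $\Pp(I)(k,t)$ monic of degree $\apd(I)$ in $t$. For (iii), the key observation is that, since $S$ is a domain and $I\neq 0$, the module $S/I^k$ has generic rank zero; the alternating sum of the ranks in its finite minimal free resolution is therefore $0$, i.e.\ $\sum_{i=0}^{\apd(I)}(-1)^i\Pp_i(I)(k)=0$ for $k\gg 0$, which reads $\Pp(I)(k,-1)=0$. We may thus set $\gamma_k^{(\apd(I)-1)}=-1$ and $\alpha_{\apd(I)-1}=-1$. Comparing the coefficient of $k^{\ell(I)-1}$ on both sides of this identity also yields $\sum_{i=1}^{\Kp(I)}(-1)^i k_i=0$, a fact that will force $-1$ to be a root of the limit polynomial $L(t)$ introduced below.

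To locate the root escaping to $-\infty$ in (iv), I would substitute $t=\Pp_1(I)(k)\,u$ into $\Pp(I)(k,t)$ and divide by $\Pp_1(I)(k)^{\apd(I)}$, obtaining
\[
  u^{\apd(I)}+u^{\apd(I)-1}+\sum_{i=2}^{\apd(I)}\frac{\Pp_i(I)(k)}{\Pp_1(I)(k)^{i}}\,u^{\apd(I)-i}.
\]
Since $\deg\Pp_i(I)\leq\ell(I)-1=\deg\Pp_1(I)$ and $\Pp_1(I)(k)\to+\infty$ (its leading coefficient $k_1/(\ell(I)-1)!$ is positive), every coefficient with $i\geq 2$ tends to $0$ and the rescaled polynomial converges coefficient-wise to $u^{\apd(I)-1}(u+1)$. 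Continuity of roots produces a root $\gamma_k^{(\apd(I))}$ of $\Pp(I)(k,t)$ with $\gamma_k^{(\apd(I))}/\Pp_1(I)(k)\to -1$; since $\Pp(I)(k,t)$ has real coefficients and $-1$ is a simple real root of the limit, $\gamma_k^{(\apd(I))}$ is real for $k\gg 0$, and hence $\gamma_k^{(\apd(I))}\to-\infty$.

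For (ii), I would instead divide $\Pp(I)(k,t)$ by $\Pp_1(I)(k)$ without rescaling $t$. The coefficients tend to $k_i/k_1$ for $1\leq i\leq\Kp(I)$, to $0$ for $i>\Kp(I)$, and to $0$ for $i=0$, so uniformly on compacta the quotient converges to
\[
  L(t)\;:=\;\frac{1}{k_1}\sum_{i=1}^{\Kp(I)}k_i\,t^{\apd(I)-i},
\]
a polynomial of degree $\apd(I)-1$ whose nonzero roots are $\alpha_1,\dots,\alpha_{\Kp(I)-1}$ (one of which equals $-1$ by the preceding identity) and which has $0$ as a root of multiplicity $\apd(I)-\Kp(I)$. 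Choosing a closed disk $D\subset\CC$ containing all roots of $L(t)$ in its interior and avoiding them on $\partial D$, uniform convergence and Rouch\'e's theorem give for $k\gg 0$ exactly $\apd(I)-1$ zeros of $\Pp(I)(k,t)$ in $D$; the escaping root of the previous step accounts for the missing $\apd(I)$-th zero. A further Rouch\'e argument on small disks around each root of $L(t)$ then matches the bounded roots individually with the $\alpha_i$, giving (ii) after relabeling.

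The main obstacle is the drop in degree when dividing by $\Pp_1(I)(k)$: the leading coefficient of the quotient tends to $0$, so the standard equal-degree continuity of roots is not directly applicable and one of the $\apd(I)$ roots must escape to infinity. The two-scale analysis resolves this, because the $u$-rescaling of the previous step pins down the location of the escaping root, leaving Rouch\'e on the fixed disk $D$ to track the remaining $\apd(I)-1$ bounded roots and identify them with the roots of $L(t)$.
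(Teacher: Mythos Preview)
Your argument is correct, and it takes a genuinely different route from the paper's. The paper packages the analytic content into a separate technical lemma (Lemma~3.1) about sequences $f_k(t)+k^\ell f(t)+t^q$: it uses the maximum principle on $1/g_k$ to force a zero into each small disk around a root $\alpha_i$ of $f$, treats multiple roots by differentiating and inducting (invoking a theorem from Rahman--Schmeisser), and then obtains the escaping root by a counting argument (only $q-1$ roots are bounded, so one is unbounded; it is the \emph{unique} unbounded root, hence real by conjugate pairing, hence tends to $-\infty$ by non-negativity of coefficients). The theorem then follows by plugging in $q=\apd(I)$, $\ell=\ell(I)-1$, $f(t)=\tfrac{1}{(\ell(I)-1)!}\sum k_i t^{\apd(I)-i}$, together with Remark~\ref{minusone} for part~(iii).

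Your two-scale Rouch\'e approach is arguably cleaner: the rescaling $t=\Pp_1(I)(k)\,u$ pins down the escaping root directly as a perturbation of the simple root $u=-1$ of $u^{\apd(I)-1}(u+1)$, and the division by $\Pp_1(I)(k)$ plus Rouch\'e on nested disks handles the bounded roots, including multiplicities, without any separate induction or external reference. What the paper's route buys is a reusable lemma stated for general sequences of polynomials with non-negative coefficients; what yours buys is a self-contained argument tailored to the situation at hand.

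One small point to tighten for part~(iii): you deduce $\Pp(I)(k,-1)=0$ only for $k\gg 0$ from the alternating-sum identity, but the statement claims $\gamma_k^{(\apd(I)-1)}=-1$ for \emph{all} $k\geq 1$. Just observe that $\sum_i(-1)^i\Pp_i(I)(k)$ is a polynomial in $k$ that vanishes for all large $k$, hence is identically zero.
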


    The assumption $\ell(I) \geq 2$ is equivalent to saying that $I$ is not a principal ideal. Clearly, for principal
    ideals $I$, each power $I^k$ is principal and $\beta_0 = \beta_1 = 1$, $\beta_i = 0$ for $i \geq 2$ which
    is a trivial situation for our purposes.

    Theorem \ref{limittheorem} focuses our interest on the number $\Kp(I)$ and the multiplicities $k_i$ for
    $1 \leq i \leq \Kp(I)$.
    Note, that in Theorem \ref{limittheorem}
    the number of $\alpha_i$ equal to $0$ is $\apd(I) - \Kp(I)$ and that for $1 \leq i \leq \Kp(I)$ we
    have $$\frac{k_i}{k_1} = \lim_{k\to \infty} \frac{\beta_i(S/I^k)}{\beta_1(S/I^k)}.$$
    The following two are our main results.

    \begin{Theorem} \label{main}
       Suppose that $\ell(I)=n$. Then $\Kp(I) = n$, in particular, $\deg \Pp_i(I)=n-1$ for $i=1,\ldots,n$.
    \end{Theorem}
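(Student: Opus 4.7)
My plan is to combine the monotonicity in Proposition \ref{first} with a Koszul depth computation on the Rees algebra, localized at a suitable prime. By Proposition \ref{first} we already have $\deg\Pp_1(I) = \ell(I)-1 = n-1$ and $\deg\Pp_1(I) \geq \cdots \geq \deg\Pp_{\apd(I)}(I)$, while the upper bound $\deg\Pp_i(I) \leq n-1$ is automatic. So the theorem reduces to two assertions: $\apd(I) = n$ and $\deg\Pp_n(I) = n-1$; the intermediate equalities $\deg\Pp_i(I) = n-1$ for $1 < i < n$ then follow by sandwiching. The first assertion is routine: Burch's inequality gives $\min_k \depth(S/I^k) \leq \dim S - \ell(I) = 0$, Brodmann's theorem says the depth is eventually constant, so it is eventually $0$, and Auslander-Buchsbaum yields $\projdim(S/I^k) = n$ for $k \gg 0$.

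For the second assertion I would work with the module underlying $\Pp_n(I)$ in the proof of Proposition \ref{first}. Letting $\xb = x_1,\ldots,x_n$ be a minimal generating set of $\mm$, the Koszul complex $K(\xb;-)$ is a free $S$-resolution of $K$, so $\Tor_i^S(K,M) = H_i(\xb;M)$ for every $S$-module $M$. Applied to the graded $R(I)$-module $R(I) = \Dirsum_{k \geq 0} I^k$, this yields
$$T_{n-1} \; := \; H_{n-1}(\xb; R(I)) \; = \; \Dirsum_{k \geq 0} \Tor_{n-1}^S(K, I^k),$$
a graded $R(I)$-module annihilated by $\mm$ (Koszul homology is killed by the sequence) and hence by $\mm R(I)$, so a finitely generated graded module over $F(I) := R(I)/\mm R(I)$. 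Using $\Tor_n^S(K,S/I^k) \cong \Tor_{n-1}^S(K,I^k)$, obtained from $0 \to I^k \to S \to S/I^k \to 0$ and $\Tor_{\geq 1}^S(K,S) = 0$, its Hilbert polynomial is $\Pp_n(I)$, whence $\deg\Pp_n(I) = \dim_{F(I)} T_{n-1} - 1$. The theorem thus reduces to showing $\dim_{F(I)} T_{n-1} = n = \dim F(I)$.

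The key step is then a localization. Because $\dim F(I) = n$, pick a minimal prime $\pp$ of $F(I)$ with $\dim F(I)/\pp = n$, and let $\tilde\pp \subset R(I)$ be its preimage. The Rees algebra $R(I)$ is a domain of dimension $n+1$, so the general inequality $\height\tilde\pp + \dim R(I)/\tilde\pp \leq \dim R(I)$ combined with $\dim R(I)/\tilde\pp = \dim F(I)/\pp = n$ forces $\height\tilde\pp \leq 1$; conversely $\tilde\pp$ contains the nonzero ideal $\mm R(I)$ of the domain $R(I)$, so $\height\tilde\pp \geq 1$. Hence $\height\tilde\pp = 1$, and $R(I)_{\tilde\pp}$ is a one-dimensional local domain with $\depth R(I)_{\tilde\pp} = 1$. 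Since $\tilde\pp$ is minimal over $\mm R(I)$, the ideal $\mm R(I)_{\tilde\pp} = (\xb) R(I)_{\tilde\pp}$ is primary to the maximal ideal of $R(I)_{\tilde\pp}$, so $\grade(\xb; R(I)_{\tilde\pp}) = \depth R(I)_{\tilde\pp} = 1$. The Koszul depth formula applied to the $n$-element sequence $\xb$ then gives
$$\max\bigl\{i : H_i(\xb; R(I)_{\tilde\pp}) \neq 0\bigr\} \; = \; n - \grade(\xb; R(I)_{\tilde\pp}) \; = \; n - 1,$$
so $(T_{n-1})_{\tilde\pp} = H_{n-1}(\xb; R(I)_{\tilde\pp}) \neq 0$. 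As $T_{n-1}$ is killed by $\mm R(I)$, this localization over $R(I)$ equals $(T_{n-1})_\pp$ over $F(I)$, placing $\pp$ in the support of $T_{n-1}$; hence $\dim_{F(I)} T_{n-1} \geq \dim F(I)/\pp = n$. Combined with the trivial bound $\leq n$, we obtain $\dim T_{n-1} = n$, and therefore $\deg\Pp_n(I) = n-1$.

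The conceptual leap, and the step I expect to be the crux, is the decision to localize the Koszul complex $K(\xb; R(I))$ at the height-one prime $\tilde\pp$ sitting above a top-dimensional minimal prime of $F(I)$: once this is done, the fact that $R(I)_{\tilde\pp}$ is a one-dimensional local domain whose maximal ideal is $n$-generator-primary forces the Koszul homology to be nonzero in degree $n-1$ automatically. The dimension bookkeeping giving $\height\tilde\pp = 1$ is the place where the hypothesis $\ell(I) = n$ really enters; without it, $\tilde\pp$ could have height $> 1$, the ring $R(I)_{\tilde\pp}$ would have larger depth, and the Koszul depth formula would no longer produce nonvanishing top homology.
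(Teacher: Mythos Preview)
Your proof is correct, and its overall architecture matches the paper's: reduce to showing $\dim H_{n-1}(\xb;R(I))=n$, produce a height-one prime $\tilde\pp$ of $R(I)$ containing $\mm R(I)$ via the dimension formula (the paper invokes catenarity of $R(I)$ to the same end), and show that the localization of $H_{n-1}(\xb;R(I))$ at $\tilde\pp$ is nonzero. The genuine difference lies in that last nonvanishing step. The paper identifies $H_{n-1}(\xb;I^k)\cong (I^k:_S\mm)/I^k$ explicitly and then, via two colon-ideal claims in the quotient field $L$ of $R(I)$, shows directly that $(T:_L\mm T)\neq T$ for the one-dimensional local domain $T=R(I)_{\tilde\pp}$. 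You bypass all of that by invoking Koszul depth sensitivity: since $(\xb)$ is primary to the maximal ideal of the one-dimensional domain $R(I)_{\tilde\pp}$, one has $\grade(\xb;R(I)_{\tilde\pp})=\depth R(I)_{\tilde\pp}=1$, whence $H_{n-1}(\xb;R(I)_{\tilde\pp})\neq 0$ immediately. Your route is shorter and more conceptual; the paper's route, though longer, makes the socle identification $H_{n-1}(\xb;R(I))\cong\Dirsum_k(I^k:_S\mm)/I^k$ explicit, which connects to the short alternative argument of Goto recorded after the proof. One minor wording issue: Burch's inequality alone only bounds $\min_k\depth(S/I^k)$, not the eventual value; what you actually need (and what the paper cites) is Brodmann's stronger result that the \emph{asymptotic} depth satisfies $\depth(S/I^k)\leq n-\ell(I)$ for $k\gg 0$.
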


    \begin{Theorem} \label{binom}
       Suppose that $R(I)/\mm R(I)$ is a domain and $R(I)_{\mm R(I)}$
       is Cohen--Macaulay.
       Then $k_i/k_1\geq {\Kp(I)-1\choose i-1}$ for $i=1,\ldots,\Kp(I)$.
       Moreover, equality holds if and only if $R(I)_{\mm R(I)}$ is a complete intersection.
    \end{Theorem}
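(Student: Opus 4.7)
The plan is to reinterpret the ratios $k_i/k_1$ as Koszul Betti numbers of the
local Cohen--Macaulay ring $A := R(I)_{\mm R(I)}$, and then to apply the
classical lower bound on Betti numbers of Cohen--Macaulay rings. Following
the proof of Proposition~\ref{first}, for $i \geq 1$ the polynomial
$\Pp_i(I)$ is the Hilbert polynomial of the graded $F(I)$-module
$T_{i-1} := \Tor_{i-1}^S(K, R(I)) \cong H_{i-1}(\mathbf{x}; R(I))$,
where $\mathbf{x} = x_1, \ldots, x_n$ is a $K$-basis of $\mm/\mm^2$ and the
last isomorphism uses that $K(\mathbf{x}; S)$ resolves $K$ over $S$. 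In
particular $T_0 = F(I)$, hence $k_1 = e(F(I))$.

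Since $F(I)$ is a domain, every top-dimensional graded $F(I)$-module $M$
satisfies $e(M) = \rank_{F(I)}(M) \cdot e(F(I))$. Since $\Tor$ commutes with
localization at $\mm R(I)$, for $1 \leq i \leq \Kp(I)$ we obtain
\[
   \frac{k_i}{k_1} \;=\; \rank_{F(I)}(T_{i-1}) \;=\; \dim_\kappa H_{i-1}(\mathbf{x}; A),
\]
where $\kappa = F(I)_{(0)}$ is the residue field of $A$. As $A$ is
Cohen--Macaulay of dimension $h = n+1-\ell(I)$ with $\mm A$ generated by
$\mathbf{x}$, setting $c := n - h = \ell(I) - 1 = \Kp(I) - 1$ the theorem
reduces to the Koszul inequality
\[
   \dim_\kappa H_j(\mathbf{x}; A) \;\geq\; \binom{c}{j}, \qquad 0 \leq j \leq c,
\]
with equality for every $j$ if and only if $A$ is a complete intersection.

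Let $\mathbf{y}$ be a minimal generating set of $\mm A$ of cardinality $m$,
and write $A = \tilde R / J$ with $\tilde R$ regular local of embedding
dimension $m$, so that the intrinsic codimension of $A$ is $c_A := m - h$.
A unit change of basis on the Koszul complex replacing the $n - m$
redundant generators among the $x_i$ by zero, combined with the K\"unneth
formula, yields the Poincar\'e series factorisation
\[
   \sum_j \dim_\kappa H_j(\mathbf{x}; A)\, t^j
   \;=\; (1+t)^{n-m} \sum_j \dim_\kappa H_j(\mathbf{y}; A)\, t^j.
\]
Now $\dim_\kappa H_j(\mathbf{y}; A) = \beta_j^{\tilde R}(A)$, and the
Buchsbaum--Eisenbud lower bound for Cohen--Macaulay rings gives
$\beta_j^{\tilde R}(A) \geq \binom{c_A}{j}$ with equality for all $j$ iff
$A$ is a complete intersection (the CI case being an immediate Koszul
computation when $J$ is generated by a regular sequence of length $c_A$).
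Vandermonde's identity
$\sum_i \binom{n-m}{j-i}\binom{c_A}{i} = \binom{c}{j}$ then delivers the
required bound on $\dim_\kappa H_j(\mathbf{x}; A)$, and the equality case
passes through the factorisation.

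The main technical input is the lower bound
$\beta_j^{\tilde R}(A) \geq \binom{c_A}{j}$ for the Cohen--Macaulay ring
$A$ together with its characterisation of equality by complete
intersections. The case $j = 1$ is essentially Krull's principal ideal
theorem ($\mu(J) \geq \height J$, equality iff $J$ is a complete
intersection), and via the identification $H_1(\mathbf{y}; A) \cong
J/\mm_{\tilde R}J$ this special case alone already controls the equality
direction of the theorem; the bounds for $j \geq 2$ require the full
Cohen--Macaulay structure of the minimal free resolution of $A$, which
is the main delicate point of the argument.
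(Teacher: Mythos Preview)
Your overall architecture matches the paper's: both reduce $k_i/k_1$ to $\dim_\kappa H_{i-1}(\mathbf{x};A)$ with $A=R(I)_{\mm R(I)}$ via the multiplicity formula for modules over the domain $F(I)$, and both then bound these Koszul numbers from below by binomial coefficients using that $A$ is a quotient of a regular local ring. The paper builds the regular ring concretely as $B=(S[y_1,\dots,y_m])_P$ and uses $H_{i-1}(\mathbf{x};A)\cong\Tor_{i-1}^B(W,A)$, while you strip off redundant Koszul generators and pass to a minimal regular presentation $\tilde R\twoheadrightarrow A$; these are equivalent bookkeeping devices.

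There is, however, a genuine gap at your key step. You invoke a ``Buchsbaum--Eisenbud lower bound for Cohen--Macaulay rings'' asserting $\beta_j^{\tilde R}(A)\geq\binom{c_A}{j}$ for an arbitrary Cohen--Macaulay quotient $A=\tilde R/J$. No such theorem is known: localizing at a minimal prime of $J$ reduces this to the bound $\beta_j\geq\binom{c_A}{j}$ for an \emph{Artinian} quotient of a $c_A$-dimensional regular local ring, which is precisely the (open) Buchsbaum--Eisenbud--Horrocks conjecture. What saves you---and what the paper actually uses---is that $A$ is a \emph{domain} (it is a localization of the Rees algebra), so $J$ is \emph{prime}. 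Localizing your minimal $\tilde R$-resolution of $A$ at $J$ then gives a free $\tilde R_J$-resolution of the residue field $\kappa(J)$, whose minimal resolution is the Koszul complex with ranks $\binom{c_A}{j}$; hence $\beta_j^{\tilde R}(A)\geq\binom{c_A}{j}$, with equality forcing $J$ to be a complete intersection. This is exactly the content of Lemma~\ref{lower}. Once you insert this observation in place of the unproven CM bound, your argument is complete.

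A minor point: your identity $\ell(I)-1=\Kp(I)-1$ deserves a word of justification. It follows because $\Kp(I)-1$ equals the largest $j$ with $H_j(\mathbf{x};A)\neq 0$, which by depth sensitivity is $n-\depth A=n-\dim A$ (using Cohen--Macaulayness), and $\dim A=\height(\mm R(I))=(n+1)-\ell(I)$ by catenarity of $R(I)$.
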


    As a first corollary we get that the inequality from Theorem \ref{binom} holds if $\ell(I) = n$ and
    $R(I)/\mm R(I)$ is a domain. Observe that $R(I)/\mm R(I)$ is always a domain if $I$ is a graded ideal in the
    polynomial ring generated by elements of the same degree. From this remark and Theorem \ref{binom} we deduce in
    a second corollary that equality holds for Artinian monomial ideals generated in a single degree with linear relations.

    Based on experimental data we conjecture that the inequality from Theorem \ref{binom} holds in general.

    \begin{Conjecture} \label{conjecture}
      Let $I\subset S$ be a (graded) ideal. Then
      \[
        \lim_{k\to \infty} \frac{\beta_i(S/I^k)}{\beta_1(S/I^k)} = \frac{k_i}{k_1} \geq {\Kp(I) -1\choose i-1}
                     \quad\text{for}\quad i=1,\ldots,\Kp(I).
      \]
    \end{Conjecture}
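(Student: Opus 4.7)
The plan is to attempt a reduction of Conjecture \ref{conjecture} to the hypotheses of Theorem \ref{binom}, namely the case where $F(I) = R(I)/\mm R(I)$ is a domain and $R(I)_{\mm R(I)}$ is Cohen--Macaulay. First I would recast the conjecture in multiplicity-theoretic terms. Following the identification used in Proposition \ref{first}, for $i \geq 1$ the bigraded module $M_i := \Tor_{i-1}^S(K, R(I)) \cong H_{i-1}(\xb; R(I))$ is a finitely generated graded $F(I)$-module whose Hilbert function agrees with $\beta_i(S/I^k) = \Pp_i(I)(k)$ for $k \gg 0$; here $\xb$ denotes a minimal generating sequence for $\mm$. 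For $i \leq \Kp(I)$ the module $M_i$ has Krull dimension $\ell(I)$ and $k_i$ equals the Hilbert--Samuel multiplicity $e(M_i)$. In particular $M_1 = K \otimes_S R(I) = F(I)$, so $k_1 = e(F(I))$, and the conjecture becomes
$$
  e\bigl(H_{i-1}(\xb; R(I))\bigr) \;\geq\; {\Kp(I)-1 \choose i-1} \, e(F(I)).
$$

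For the first reduction, to the domain case, I would invoke the associativity formula: letting $\pp_1, \ldots, \pp_s$ be the minimal primes of $F(I)$ of maximal dimension, one has $e(F(I)) = \sum_j \lambda_j \, e(F(I)/\pp_j)$ and $e(M_i) = \sum_j \mu_{i,j} \, e(F(I)/\pp_j)$, with $\lambda_j$ and $\mu_{i,j}$ the lengths of $F(I)_{\pp_j}$ and $(M_i)_{\pp_j}$ as $F(I)_{\pp_j}$-modules. It would suffice to establish the componentwise inequality $\mu_{i,j} \geq {\Kp(I)-1 \choose i-1} \lambda_j$, but this requires realising each $F(I)/\pp_j$ as the fiber cone of an ideal with the same value of $\Kp$, which is not automatic. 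For the second reduction, to the Cohen--Macaulay case, I would pass (after enlarging the residue field) to a minimal reduction $J \subseteq I$ with $\mu(J) = \ell(J) = \ell(I)$. Then the inclusion $R(J) \hookrightarrow R(I)$ is finite, $R(J)$ is a quotient of a polynomial ring in $\ell(I)$ variables over $S$, and $R(J)_{\mm R(J)}$ is substantially closer to being Cohen--Macaulay, which permits a direct application of Theorem \ref{binom} to $J$.

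The main obstacle will be transferring the resulting inequality from $J$ to $I$ across the finite extension. An extension $R(J) \hookrightarrow R(I)$ of generic rank $r$ multiplies $e(F(I))$ by $r$ but can multiply $e(H_{i-1}(\xb; R(I)))$ by an essentially unrelated factor, since Koszul homology is neither additive nor multiplicative over finite extensions. Overcoming this gap seems to require either a structural comparison of $H_{i-1}(\xb; R(I))$ and $H_{i-1}(\xb; R(J))$ via a Cartan--Eilenberg or change-of-rings spectral sequence, or a genuinely new Koszul-type lower bound $e(H_{i-1}(\xb; N))/e(N) \geq {\mu(N)-1 \choose i-1}$ valid for arbitrary finitely generated graded modules $N$ over a standard graded $K$-algebra. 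I expect this is precisely the point where the methods underlying Theorem \ref{binom} become insufficient, and a reasonable preliminary step would be to test the conjecture numerically on ideals $I$ for which $F(I)$ is non-reduced or $R(I)_{\mm R(I)}$ has low depth, since these configurations fall outside the reach of the present proof.
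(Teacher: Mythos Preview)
The statement you are addressing is labelled \emph{Conjecture} in the paper and is not proved there; the authors state explicitly that it is based on experimental data and that Theorem~\ref{binom} establishes it only under the additional hypotheses that $R(I)/\mm R(I)$ is a domain and $R(I)_{\mm R(I)}$ is Cohen--Macaulay. There is therefore no proof in the paper to compare your proposal against.

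Your outline is a reasonable attempt to extend the argument of Theorem~\ref{binom}, and you correctly isolate the two obstructions: the associativity-formula reduction requires realising each component $F(I)/\pp_j$ as a fiber cone with the same value of $\Kp$, which need not hold, and the passage to a minimal reduction $J$ does not control how the multiplicities of the Koszul homology modules transform under the finite extension $R(J)\hookrightarrow R(I)$. Your own assessment that these gaps are where the available techniques run out is accurate and matches the authors' decision to leave the statement as a conjecture. The suggestion to test the conjecture numerically on ideals with non-reduced fiber cone or low-depth $R(I)_{\mm R(I)}$ is sensible, but note that the paper already reports that no counterexamples were found in the experiments carried out.
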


    We note that the condition $\frac{k_i}{k_1} \geq {\Kp(I) -1\choose i-1}$
    from Conjecture \ref{conjecture} is
    satisfied whenever the polynomial $\sum_{i=1}^{\Kp(I)} k_{i} \cdot t^{\apd(I)-i}$ has only real roots
    (see \cite[Observation 3.4]{BellSkandera2007}).
    Indeed, we know of no example for which the polynomial is not real rooted. But we consider our evidence too weak 
    for a conjecture. Indeed, we see in Remark \ref{minusone} that for $\ell(I) \geq 2$ we have that $-1$ is always a 
    root. In addition, in all example we tried experimentally $\ell(I)$ was small and there were only very few roots
    other than $-1$.

    For the class of monomial ideals it is an interesting question which of the invariants defined for $I$ in the
    introduction can depend on the characteristic of the field. The fact that $\ell(I)$ is independent of the 
    field is an immediate consequence of a convex geometric description in \cite{BiviaAusina2003} 
    (see also \cite[Corollary 4.10]{Singla2007}). On the other hand for the invariants 
    $\apd(I)$, $\Pp_i(I)$ for some $i > 1$, $\Kp(I)$ and then $k_i$ for some $i > 1$ we do not know of a proof 
    nor a counterexample. In general counterexamples are hard to find, due to the fact that only small powers of 
    monomial ideals can be treated with the existing computer algebra systems.

  \section{The Kodiyalam polynomials of an ideal}

    Before we come to a more subtle analysis of the polynomials $\Pp_i(I)(k)$ we state a simple
    consequence of the fact that $\beta_i(S/I^k)=\Pp_i(I)(k)$ for $k \gg0$.
    As mentioned in the introduction the conclusion was first shown by Brodmann
    \cite{Brodmann1979} in terms of depth.

    \begin{Remark} \label{projdim}
      The projective dimension $\projdim(S/I^k)$ stabilizes for $k \gg 0$.
    \end{Remark}
    \begin{proof}
      Let $q=\max\{i\:\; \Pp_i(I)\neq 0\}$, and let $k_0$ be an
      integer such that $\Pp_i(I)(k)=\beta_i(S/I^k)$ for all $k\geq k_0$. Since $\Pp_i(I)(k)$ has only finitely many
      zeroes, we may also assume that $\Pp_q(I)(k)\neq 0$ for all $k\geq k_0$. Then $\projdim (S/I^k)=q$ for all
      $k\geq k_0$.
    \end{proof}

    For a polynomial $P$ we set $\deg P=-\infty$ if is the zero polynomial. Using this convention we get.

    \begin{Proposition} \label{first}
      $\ell(I)-1=\deg \Pp_1(I)\geq \deg \Pp_2(I)\geq \ldots \geq \deg \Pp_n(I)$.
    \end{Proposition}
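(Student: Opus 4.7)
The plan is to identify each Kodiyalam polynomial $\Pp_i(I)$ with the Hilbert polynomial of a finitely generated graded module over the fibre ring $F(I) = R(I)/\mm R(I)$, and then use an induction on the size of the Koszul complex to read off the chain of degree inequalities.

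For the identification, I apply the Koszul complex $K^\bullet = K^\bullet(x_1,\ldots,x_n;S)$ on a minimal generating set of $\mm$, which is a free resolution of $K = S/\mm$. Using the short exact sequence $0 \to I^k \to S \to S/I^k \to 0$ and the vanishing of $\Tor_j^S(K,S)$ for $j \geq 1$, one gets
$$\Tor_i^S(K, S/I^k) \iso \Tor_{i-1}^S(K, I^k) = H_{i-1}(K^\bullet \otimes_S I^k)$$
for every $i \geq 1$, so that summing over $k$ produces
$$T_i := H_{i-1}(K^\bullet \otimes_S R(I)),$$
a finitely generated graded $R(I)$-module annihilated by $\mm R(I)$, hence a finitely generated graded $F(I)$-module with Hilbert polynomial $\Pp_i(I)$. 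For $i = 1$ one has $T_1 = R(I)/\mm R(I) = F(I)$ itself, and since $F(I)$ has Krull dimension $\ell(I)$ by the definition of the analytic spread, $\deg \Pp_1(I) = \ell(I) - 1$, which also upper-bounds every $\deg \Pp_i(I)$.

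For the monotonicity I would prove by induction on $m$ the stronger statement that, for every $0 \leq m \leq n$, the Krull dimensions of the modules $L_j^{(m)} := H_j(K^\bullet(x_1,\ldots,x_m) \otimes R(I))$ as $R(I)$-modules form a weakly decreasing sequence in $j$; the base case $m=0$ is trivial since only $L_0^{(0)} = R(I)$ is non-zero. For the inductive step, the decomposition $K^\bullet(x_1,\ldots,x_m) = K^\bullet(x_1,\ldots,x_{m-1}) \otimes K^\bullet(x_m)$ realises $L_j^{(m)}$ as the $j$-th homology of the mapping cone of multiplication by $x_m$ on $K^\bullet(x_1,\ldots,x_{m-1}) \otimes R(I)$, and the cone long exact sequence yields
$$0 \to L_j^{(m-1)}/x_m L_j^{(m-1)} \to L_j^{(m)} \to (0 :_{L_{j-1}^{(m-1)}} x_m) \to 0.$$
Combining the submodule lower bound $\dim L_j^{(m)} \geq \dim L_j^{(m-1)}/x_m L_j^{(m-1)}$ with the obvious upper bound on $L_{j+1}^{(m)}$ from the same sequence, the elementary fact $\dim(0 :_M x) \leq \dim M/xM$ (every associated prime of $(0 :_M x)$ is an associated prime of $M$ containing $x$, hence lies in $\Supp M/xM$), and the inductive hypothesis $\dim L_j^{(m-1)} \geq \dim L_{j+1}^{(m-1)}$, one concludes $\dim L_j^{(m)} \geq \dim L_{j+1}^{(m)}$. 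Setting $m = n$ gives $\dim T_i \geq \dim T_{i+1}$, and hence $\deg \Pp_i(I) \geq \deg \Pp_{i+1}(I)$.

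The main subtlety I anticipate is that for $m < n$ the modules $L_j^{(m)}$ are only annihilated by $(x_1,\ldots,x_m)R(I)$ rather than by all of $\mm R(I)$, so their $k$-graded Hilbert functions are generally not of finite type and the polynomial-degree language breaks down at intermediate stages; working throughout with Krull dimension over $R(I)$ bypasses this difficulty, and the equivalence with the Hilbert polynomial degree is restored only at the top of the induction, when $m = n$ and the module becomes an $F(I)$-module.
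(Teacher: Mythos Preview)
Your identification of $\Pp_i(I)$ with the Hilbert polynomial of $H_{i-1}(\xb;R(I))$, and the computation $\deg\Pp_1(I)=\ell(I)-1$, coincide exactly with the paper's argument.

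For the monotonicity, however, your induction has a gap. From the cone sequence at level $j+1$ you get
\[
\dim L_{j+1}^{(m)}\le\max\bigl(\dim L_{j+1}^{(m-1)}/x_m L_{j+1}^{(m-1)},\ \dim(0:_{L_j^{(m-1)}}x_m)\bigr),
\]
and you correctly bound the second term by $\dim L_j^{(m-1)}/x_m L_j^{(m-1)}\le\dim L_j^{(m)}$. But the first term is only bounded by $\dim L_{j+1}^{(m-1)}\le\dim L_j^{(m-1)}$ via the inductive hypothesis, whereas you need it bounded by $\dim L_j^{(m-1)}/x_m L_j^{(m-1)}$. Dividing by $x_m$ can drop the dimension of $L_j^{(m-1)}$ strictly while leaving that of $L_{j+1}^{(m-1)}$ unchanged, so the purely dimensional hypothesis does not close the step.

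The fix is to carry the stronger statement $\Supp L_{j+1}^{(m-1)}\subset\Supp L_j^{(m-1)}$ through the induction: then intersecting with $V(x_m)$ preserves the containment, and together with your observation $\Supp(0:_Mx)\subset\Supp(M/xM)$ one gets $\Supp L_{j+1}^{(m)}\subset\Supp L_j^{(m)}$, from which the dimension inequality follows. With this strengthening your induction goes through and is, in effect, an inductive proof of the rigidity of Koszul homology. The paper, by contrast, simply invokes rigidity as a known fact (\cite[Exercise~1.6.31]{BrunsHerzog1998}): if $H_i(\xb;R(I))_P\ne 0$ then $H_{i-1}(\xb;R(I))_P\ne 0$, hence $\Supp H_i(\xb;R(I))\subset\Supp H_{i-1}(\xb;R(I))$ in one line. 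Your route is more self-contained but, once corrected, amounts to reproving the cited result.
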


    \begin{proof}
       For $i\geq 1$ we have
       \[
          \beta_i(S/I^k)=\beta_{i-1}(I^k)=\dim_K\Tor_{i-1}^S(S/\mm, I^k)=\dim_KH_{i-1}({\xb};I^k).
       \]
       Here $H_i({\xb};I^k)$ is the $i$th Koszul homology of $I^k$ with respect to ${\xb}=x_1,\ldots,x_n$,
       where ${\xb}$ is a regular system of parameters if $S$ is a regular local ring, and is the sequence of indeterminates
       in case $S$ is a polynomial ring.

       Observe that $H_i({\xb};R(I))$ is a graded $H_0({\xb};R(I))$-module. Thus by
       $H_0({\xb};R(I))= R(I)/\mm R(I)$ it is a graded
       $R(I)/\mm R(I)$-module.
       Since $H_i({\xb};R(I))_k=H_i({\xb};I^k)$ for all $k$, we see that $\Pp_i(I)$ is the
       Hilbert polynomial of $H_{i-1}({\xb};R(I))$ for $i \geq 1$. Thus the degree of $\Pp_i(I)$ is the Krull
       dimension of $H_{i-1}({\xb};R(I))$ minus $1$. In particular, $\deg \Pp_1(I)=\dim R(I)/\mm R(I)-1=\ell(I)-1$.

       In order to prove the inequalities $\deg \Pp_{i+1}(I)\leq \deg \Pp_{i}(I)$, it remains to show that
       $\dim H_i({\xb};R(I))\leq \dim H_{i-1}({\xb};R(I))$ for all $i\geq 1$. To see this, let
       $P\in \Supp H_i({\xb};R(I))$. Then $\mm R(I)\subset P$ and  $H_i({\xb};R(I)_P)=H_i({\xb};R(I))_P
       \neq 0$. Rigidity of the Koszul homology (see \cite[Exercise 1.6.31]{BrunsHerzog1998}) implies that
       $H_{i-1}({\xb};R(I))_P=H_{i-1}({\xb};R(I)_P)\neq 0$. Thus
       $\Supp(H_i({\xb};R(I))\subset \Supp H_{i-1}({\xb};R(I))$, which yields the desired inequality for the
       dimensions.
   \end{proof}

   We give a first example which shows that there are cases where the inequalities in Proposition \ref{first} are
   indeed equalities.

   \begin{Example} \label{firstexample}
    {\em    Let $I=(x^3, x^2-yz, y^4+xz^3, xy-z^2)\subset S= K[x,y,z]$. The ideal $I$ is $(x,y,z)$-primary, so that
       $\ell(I)=3$ and $\projdim S/I^k=3$ for all $k$. It follows form Theorem~\ref{main} that $\deg \Pp_i(I)=2$ for $i=1,2,3$.
       A calculation with CoCoA indicates that $\Pp_1(I)(k)=(k+1)^2$, $\Pp_2(I)(t)=(\frac{5}{2}k+\frac{7}{2})k$ and
       $\Pp_3(I)(k)=\frac{3}{2}k(k+1)$.
       So here we have $2 = \ell(I) -1 = \deg \Pp_1(I) = \deg \Pp_2(I) = \deg \Pp_3(I)$.
       More precisely, $k_1 = 6$, $k_2 = 15$ and $k_3 = 21$.}
    \end{Example}

    The second example shows that even for monomial ideals the inequalities from Proposition \ref{first} can be strict.

    \begin{Example} \label{secondexample}
      {\em
       Consider the monomial ideal  $$I=(a^6,a^5b,ab^5,b^6,a^4b^4c,a^4b^4d,a^4e^2f^3)$$ in $\QQ[a,b,c,d,e,f]$.
       Then $\Pp_1(I)(k) = 3k^2+4k-7$,
       $\Pp_2(I)(k) = 6k^2+3k-7$, $\Pp_3(I)(k) = 3k^2-k+5$, $\Pp_4(I)(k) = 5$, $\Pp_5(I)(k) = 1$ and $\Pp_6(I)(k) = 0$.
       Thus $\deg \Pp_i(I)=2$ for $i=1,2,3$, while
       $\Pp_4(I)$ and $\Pp_5(I)$ are of degree $0$,  and $\Pp_6(I)$ is the zero polynomial. In particular, $\Kp(I) = 3$.}
    \end{Example}

    In the light of Proposition \ref{first} and Examples \ref{firstexample} and \ref{secondexample},
    Theorem \ref{main} provides sufficient conditions for extremal behavior of $\Kp(I)$.

    \begin{proof}[Proof of Theorem \ref{main}]
       It has been shown by Brodmann \cite{Brodmann1979} that $\projdim S/I^k\geq \ell(I)$ for $k\gg 0$. Thus our assumptions imply
       that $\projdim S/I^k=n$ for $k\gg 0$. Therefore,  $\Pp_n(I)\neq 0$ and $\deg \Pp_n(I) \geq 0$.

       We will show that $\deg \Pp_n(I)=n-1$,
       equivalently, that $\dim H_{n-1}({\xb};R(I))=n$. Then the assertion of the theorem follows from
       Proposition~\ref{first}.

       Notice that
       \[
          H_{n-1}({\xb};R(I))_k=H_{n-1}({\xb};I^k)\iso H_n({\xb};S/I^k)\iso (I^k:_S \mm)/I^k.
       \]
       Hence as an $R(I)/\mm R(I)$-module $$H_{n-1}({\xb};R(I)) \cong \Dirsum_{k\geq 0}((I^k:_S \mm)/I^k)t^k.$$
       By the following inclusion of $R(I)/\mm R(I)$-modules
       \[
          (R(I):_{S[t]}\mm R(I))/R(I)\subset \Dirsum_{k\geq 0}\Big((I^k:_S \mm)/I^k\Big)t^k
       \]
       it suffices to prove that the dimension of $(R(I):_{S[t]} \mm R(I))/R(I)$ is equal to $n$.

       We may assume that $n>1$, because otherwise the theorem is trivially true. Let $L$ be the quotient field of $R(I)$.
       Then $L$ is also the quotient field of $S[t]$.

       \smallskip

       \noindent {\sf Claim 1:}
       \[
          (R(I):_{S[t]} \mm R(I)) = (R(I):_{L} \mm R(I)).
       \]

       \smallskip

       \noindent $\triangleleft$ {\sf Proof of Claim 1:}
       Let $f\in L$ with $f\mm R(I)\subset  R(I)$, and let $N=S\setminus\{0\}$.  Localizing with respect to $N$,
       we obtain that $\mm R(I)_N=R(I)_N= L_0[t]$, where $L_0$ is the quotient field of $S$. Note, that $L$ is also the quotient field
       of $L_0[t]$. Thus $f\mm R(I)\subset  R(I)$ yields $f L_0[t]\subset L_0[t]$,  which implies that $f \in L_0[t]$.
       Let $f=\sum_{i=0}^rf_it^i$ with $f_i\in L_0$.
       Then, since $f\mm\in R(I)$, it follows that $f_i\mm\in I^i\subset S$, and hence  we see that $f_i\in S$ because $\dim S>1$.
       Therefore, we conclude $f\in S[t]$, as desired. $\triangleright$

       By Claim 1 we have reduced the assertion follows if we show that $\dim (R(I):_L \mm R(I))/R(I) = n$.
       By assumption,  $\dim R(I)/\mm R(I)=n$. Since $R(I)$ is Catenarian and since $\dim R(I)=n+1$, it follows that
       $\height \mm R(I)=1$. Let $P$ be a prime ideal in $R(I)$ with $\height P=1$ and set $T=R(I)_P$. Then $T$ is a one
       dimensional local domain with quotient field $L$ and $(R(I):_L \mm R(I))_P= (T:_L \mm T)$.

       \smallskip

       \noindent {\sf Claim 2:} $(T:_L \mm T) \neq T$.

       \smallskip

       Claim 2 implies that $((R(I):_L \mm R(I))/R(I))_P\neq 0$, so that $P$
       is in the support of the  module $(R(I):_L \mm R(I))/R(I)$. Consequently, $\dim (R(I):_L \mm R(I))/R(I)\geq \dim R(I)/P=
       \dim R(I)-1=n$. Since the reversed inequality is trivially true, the desired equality follows.

       \noindent $\triangleleft$ {\sf Proof of Claim 2:} Suppose that $(T:_L \mm T)=T$. Then
       $(T:_L \mm^2 T)=(T:_L \mm T):_L \mm T=(T:_L \mm T)=T$. By induction on $k$, one gets that
       $(T:_L \mm^k T)=T$ for all $k$. Let $x\in \mm T$, $x\neq 0$. Since $\dim T=1$, there exists an integer $k$ such that
       $\mm ^k T\subset xT$. Hence $x^{-1}\mm^k T\subset T$, so that $x^{-1}\in (T:_L \mm^k T)=T$. This is a contradiction,
       since $x$ is a non-unit in $T$, because $x\in \mm T$. $\triangleright$
   \end{proof}

   In case $I$ is $\mm$-primary the consequence of Theorem \ref{main} was first proved using different means 
   in \cite{FallaLaBarbieraStagliano2009}. In this case the result also follows by the subsequent short argument that
   was provided to the first author by S. Goto. Let $G(I) = \Dirsum_{k \geq 0} I^k/I^{k+1}$ be the associated graded ring
   of the $\mm$-primary ideal $I$ and assume $n > 0$. Then choose a prime $P \in \Ass G(I)$ with $\dim G(I)/P = n$. Since
   $I$ is $\mm$-primary, $\mm G(I)$ is a nilpotent ideal in $G(I)$. Hence $\mm G(I) \subseteq P$ and  $G(I)/P
   \subseteq (0):_{G(I)} \mm$. From that it follows that the $S$-length of 
   $H_{n-1}({\xb};R(I))_k \cong (I^k:_S \mm)/I^k$ is a polynomial in $k$ of degree $n-1$. By Proposition \ref{first} the
   assertion follows.
 
   We now turn our attention to the multiplicities $k_i$ for $1 \leq i \leq \Kp(I)$.

   \begin{Remark} \label{minusone}
    If $\ell(I) \geq 2$, then
     \[
        \sum_{i=1}^{\Kp(I)} (-1)^ik_i = 0.
     \]
   \end{Remark}
   \begin{proof}
     Since $\displaystyle{\sum_{i=0}^n} (-1)^i \beta_i(S/I^k)=0$ for all $k \geq 1$, it follows that
     $\sum_{i=0}^n(-1)^{i+1}\Pp_i(I)(k) =0$. All terms in the alternating sum are polynomials for $k \gg 0$. Therefore,
     for any $k$-power the alternating sum of the coefficients cancels. Now by
     $\ell(I) \geq 2$,  the maximal degree $\ell(I)-1 > 0 = \deg \Pp_0(I)(k)$ is achieved for $\Pp_i(I)(k)$, $1 \leq i \leq \Kp(I)$.
     This implies the assertion.
   \end{proof}

   If one looks at the actual values of the $k_i$ in Example \ref{firstexample} one observes that
   $\Kp(I) = 2$ and $k_i/k_1\geq {2 \choose i-1}$,  and in Example~\ref{secondexample} we have
   $\Kp(I) = 3$ and $k_i/k_1 = {3 \choose i-1}$.  Theorem \ref{binom} provides
   conditions under which inequalities of that type hold. Before we can proceed to the proof of Theorem
   \ref{binom} we need the following lemma.

   \begin{Lemma} \label{lower}
      Let $P$ be a prime ideal of height $h$ in a regular local ring $(R,\nn, K)$. Then
      \begin{eqnarray} \label{equality}
         \dim_K \Tor_i^R(K, R/P)\geq {h\choose i} \quad \text{for}\quad i=1,\ldots,h.
      \end{eqnarray}
      Equality holds if and only $P$ is generated by a regular sequence.
   \end{Lemma}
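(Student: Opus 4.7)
The plan is to prove the inequality by localizing at $P$, thereby reducing to the much simpler setting of a regular local ring of dimension exactly $h$. Note that $R_P$ is a regular local ring of dimension $h$ (localization preserves regularity, and $\dim R_P = \height P = h$), so the Koszul complex on a regular system of parameters of $R_P$ is a minimal free resolution of the residue field $k(P) = R_P/PR_P$, yielding $\beta_i^{R_P}(k(P)) = \binom{h}{i}$ for $0 \leq i \leq h$.

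Next, I would let $F_\bullet$ be a minimal free resolution of $R/P$ over $R$, so $\rank_R F_i = \dim_K \Tor_i^R(K, R/P)$. Since localization is exact, the complex $F_\bullet \otimes_R R_P$ is a (not necessarily minimal) free $R_P$-resolution of $(R/P)_P = k(P)$, and the ranks of its free modules satisfy $\rank_{R_P}(F_i \otimes_R R_P) = \rank_R F_i$. Because the minimal Betti numbers of a module over a local ring are a lower bound on the ranks of any free resolution, I conclude
\[
   \dim_K \Tor_i^R(K, R/P) = \rank_R F_i \; \geq \; \beta_i^{R_P}(k(P)) = \binom{h}{i},
\]
which is precisely (\ref{equality}).

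For the equality case, the ``if'' direction is immediate: if $P$ is generated by a regular sequence $y_1,\ldots,y_h$, then because $y_i \in \nn$ the Koszul complex $K(y_1,\ldots,y_h;R)$ is a minimal free resolution of $R/P$, giving $\beta_i(R/P) = \binom{h}{i}$ for all $i$. Conversely, if equality holds in (\ref{equality}) for every $i$, then in particular $\mu(P) = \beta_1(R/P) = h$. Combined with $\height P = h$ and the Cohen-Macaulayness of $R$, the standard fact that an ideal of height $h$ minimally generated by $h$ elements in a Cohen-Macaulay ring is generated by a regular sequence forces $P$ to be a complete intersection.

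I do not anticipate any substantial obstacle: the whole argument is a one-step reduction, resting on the elementary observation that after localizing at $P$ the residue field $k(P)$ has the simplest possible (Koszul) minimal resolution over the regular local ring $R_P$ of dimension $h$, giving the binomial lower bound on the nose.
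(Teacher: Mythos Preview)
Your proposal is correct and follows essentially the same argument as the paper: both localize a minimal free $R$-resolution of $R/P$ at $P$, compare ranks with the Koszul resolution of the residue field of the regular local ring $R_P$ of dimension $h$, and handle the equality case via $\mu(P)=\beta_1(R/P)=h=\height P$. Your write-up is slightly more explicit about why the rank comparison yields an inequality (any free resolution has ranks bounded below by the minimal Betti numbers), but the underlying idea is identical.
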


   \begin{proof}
      Let $\FF$ be a minimal free $R$-resolution of $R/P$. The ring $R_P$ is a regular local ring of dimension $h$, and the
      localization $\FF_P$ is a free resolution of the residue class field $R_P/PR_P$. Since $PR_P$ is generated by a regular 
      sequence of length $h$,  we see that
      \[
         \dim_K\Tor_{i}^R(K, R/P)=\rank_R F_{i}= \rank_{R_P} (F_{i})_P\geq {h\choose i-1}.
      \]

      On the other hand, if $P$ is generated by a regular sequence, then the Koszul complex of this sequence provides a minimal free
      $R$-resolution of $R/P$, and equality holds in (\ref{equality}).

      Conversely, suppose we have equality in (\ref{equality}). Then $\dim_K \Tor_1^R(K, R/P)=h$, which implies that $P$ is generated by
      $h$ elements. Since $h$ is the height of $P$, these elements form a regular sequence
   \end{proof}

   \begin{proof}[Proof of Theorem \ref{binom}]
       By the proof of Theorem \ref{limittheorem} the multiplicity of the $R(I)\mm R(I)$-module $H_{i-1}({\xb};R(I))$ is $k_i$. In particular,
       $k_1$ is the multiplicity of $R(I)\mm R(I) = H_{0}({\xb};R(I))$. Hence by
       \cite[Corollary 4.6.9]{BrunsHerzog1998} it follows that
       \[
          k_i = k_1 \cdot \rank H_{i-1}({\xb};R(I))\quad  \text{for}\quad i=1,\ldots,\Kp(I).
       \]
       Set $T=R(I)_{\mm R(I)}$ and denote by $W$ the residue class field of the local ring $T$. Then for $i=1,\ldots,\Kp(I)$ the rank of
       $H_{i-1}({\xb};R(I))$ is the vector space dimension of the $W$-vector space $H_{i-1}({\xb};T)$. Since ${\xb}$ is a system of
       generators of $\mm R(I)$, the
       numbers $\dim_WH_{i-1}({\xb};T)$ have the following interpretation: suppose $I$ is generated by $f_1,\ldots,f_m$. Let
       $A=S[y_1,\ldots,y_m]$ be the polynomial over $S$  in the variables $y_i$.  Let $J$ denote the kernel of the canonical, surjective
       $S$-algebra homomorphism $\varphi\:  A\to R(I)$  with $y_i\mapsto f_i$ for $i=1,\ldots,m$, and set  $P=(J,\mm)$. Then $P$ is a prime
       ideal and $B=A_P$ is a regular local ring. The algebra homomorphism $\varphi$ induces then a surjective homomorphism $B\to T$ of
       local rings, and it follows that
       \begin{eqnarray*} \label{q-1}
          \dim_W H_{i-1}({\xb};T)=\dim_W\Tor_{i-1}^B(W, T)\quad \text{for}\quad i=1,\ldots,\Kp(I).
       \end{eqnarray*}
       In particular, $\projdim_B T=\Kp(I)-1$, since $H_{\Kp(I)-1}({\xb};T)\neq 0$, but $H_{i-1}({\xb};T)=0$ for $i>\Kp(I)$.
       Let $H$ be the kernel of $B\to T$. Then $H$ is a prime ideal with
       \[
          \height  H=\dim B-\dim T= \dim B-\depth T=\projdim_B T=\Kp(I)-1.
       \]
       Here we have employed the assumption that $T$ is Cohen--Macaulay.

       The assertions of the theorem now follow from Lemma \ref{lower}
       applied to the prime ideal $H$ and the regular local ring $B$.
   \end{proof}

   \begin{Corollary} \label{limit}
      Suppose that $R(I)/\mm R(I)$ is a domain and that  $\ell(I)=n$. Then
      \[
         \lim_{k\to \infty} \frac{\beta_i(S/I^k)}{\beta_1(S/I^k)} = \frac{k_i}{k_1}
         \geq {n-1\choose i-1} \quad \text{for}\quad i=1,\ldots,n.
      \]
   \end{Corollary}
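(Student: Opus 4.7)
The plan is to derive this corollary as a direct combination of Theorems \ref{main} and \ref{binom}, once we verify that the Cohen--Macaulay hypothesis of Theorem \ref{binom} is automatic under the current assumptions.

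First I would invoke Theorem \ref{main}: the hypothesis $\ell(I)=n$ gives $\Kp(I)=n$ and $\deg \Pp_i(I)=n-1$ for $i=1,\ldots,n$. By definition of $k_i$, the polynomial $\Pp_i(I)(k)$ has leading coefficient $k_i/(n-1)!$, so the asymptotic ratio
\[
   \lim_{k\to\infty}\frac{\beta_i(S/I^k)}{\beta_1(S/I^k)}
   =\lim_{k\to\infty}\frac{\Pp_i(I)(k)}{\Pp_1(I)(k)}=\frac{k_i}{k_1}
\]
exists for $i=1,\ldots,n$ since numerator and denominator agree as polynomials with $\beta_i(S/I^k)$ and $\beta_1(S/I^k)$ for $k\gg 0$ and have the same degree $n-1$.

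Next I would check the hypotheses of Theorem \ref{binom}. The domain hypothesis on $R(I)/\mm R(I)$ is given. For the Cohen--Macaulay condition on $R(I)_{\mm R(I)}$, observe that $R(I)=\Dirsum_{k\geq 0}I^k t^k\subset S[t]$ is a subring of a domain, hence itself a domain; thus the localization $R(I)_{\mm R(I)}$ is a local domain. Moreover, as $R(I)$ is catenarian with $\dim R(I)=n+1$ and $\dim R(I)/\mm R(I)=\ell(I)=n$, we have $\height \mm R(I)=1$, so $R(I)_{\mm R(I)}$ is one-dimensional. Any one-dimensional Noetherian local domain is Cohen--Macaulay, so the hypothesis is satisfied.

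Applying Theorem \ref{binom} with $\Kp(I)=n$ yields
\[
   \frac{k_i}{k_1}\geq \binom{\Kp(I)-1}{i-1}=\binom{n-1}{i-1}\qquad\text{for }i=1,\ldots,n,
\]
which together with the limit identity above completes the proof. There is no real obstacle here; the work is essentially bookkeeping, and the only point needing a short remark is the automatic Cohen--Macaulayness coming from the fact that $\height \mm R(I)=1$.
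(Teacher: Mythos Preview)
Your proof is correct and follows essentially the same route as the paper: both verify that $\height \mm R(I)=1$ so that $R(I)_{\mm R(I)}$ is a one-dimensional local domain and hence Cohen--Macaulay, apply Theorem~\ref{binom}, and use $\deg \Pp_i(I)=n-1$ (from Theorem~\ref{main}) to evaluate the limit as $k_i/k_1$. If anything, you are slightly more explicit than the paper in citing Theorem~\ref{main} for $\Kp(I)=n$ and in noting that $R(I)\subset S[t]$ is a domain.
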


   \begin{proof}
      Since $\ell(I)=n$, it follows that  $P=\mm R(I)$  is a prime ideal of height $1$. Therefore, $R(I)_P$ is a one dimensional
      local domain and hence Cohen--Macaulay. Thus we may apply Theorem~\ref{binom} and obtain
      \begin{eqnarray*}
         \lim_{k\to \infty} \frac{\beta_i(S/I^k)}{\beta_1(S/I^k)}
                                 & = & \lim_{k\to\infty} \frac{\Pp_i(I)(k)}{\Pp_1(I)(k)} \\
                                 & = & \lim_{k\to\infty}\displaystyle{\frac{\frac{k_i}{(n-1)!}k^{n-1}+\cdots}
                                                                      {\frac{k_1}{(n-1)!}k^{n-1}+\cdots}}\\
                                 & = & \frac{k_i}{k_1}.
      \end{eqnarray*}
   \end{proof}

   In the next result we describe a situation in which the hypotheses of Theorem \ref{binom} for the 
   equality conclusion are satisfied.

   \begin{Corollary} \label{satisfied}
      Let $I\subset S$ be a monomial ideal generated in a single degree with $\dim S/I=0$.  Suppose  that $I$ has linear relations.
      Then
      \[
         \lim_{k\to \infty} \frac{\beta_i(S/I^k)}{\beta_1(S/I^k)} = \frac{k_i}{k_1} = {n-1\choose i-1}  \quad \text{for}\quad i=1,\ldots,n.
      \]
   \end{Corollary}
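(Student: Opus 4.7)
My plan is to verify the equality condition of Theorem~\ref{binom}. The hypotheses of that theorem hold under our assumptions: since $I$ is generated in a single degree, the fiber ring $R(I)/\mm R(I)$ is a subring of $S$ (generated by the minimal generators of $I$) and therefore a domain; since $\dim S/I=0$, the ideal $I$ is $\mm$-primary, so $\ell(I)=n$, which forces $\mm R(I)$ to be a height-one prime of $R(I)$ and $T:=R(I)_{\mm R(I)}$ a one-dimensional local domain, hence Cohen--Macaulay. Theorem~\ref{main} gives $\Kp(I)=n$, so Theorem~\ref{binom} yields $k_i/k_1\geq\binom{n-1}{i-1}$ with equality for all $i=1,\ldots,n$ if and only if $T$ is a complete intersection. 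It therefore suffices to prove the stronger statement that $T$ is a discrete valuation ring, equivalently that its maximal ideal $\mm T$, generated by the images of $x_1,\ldots,x_n$, is principal.

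I will exploit the linear-relation hypothesis to collapse these generators. Since $I$ is $\mm$-primary and monomially generated in degree $d$, the pure power $x_i^d$ must be a minimal generator of $I$ for every $i$; consequently the corresponding element $y_{x_i^d}\in R(I)$ lies outside $\mm R(I)$ and is a unit in $T$. For any two minimal monomial generators $f, g$ of $I$ with $\gcd(f,g)$ of degree $d-1$, writing $h=\gcd(f,g)$ and $f=h\,x_a$, $g=h\,x_b$, the identity $x_b f = h x_a x_b = x_a g$ in $S$ lifts to the Taylor syzygy $x_b y_f - x_a y_g = 0$ in $R(I)$, which after localization reads $x_a = x_b\cdot(y_f/y_g)$ in $T$; thus $x_a$ and $x_b$ become unit multiples of each other in $T$.

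The remaining combinatorial step is that, under the linear-relation hypothesis, the Taylor graph on the minimal monomial generators of $I$ (with edges between pairs having $\gcd$ of degree $d-1$) is connected. Otherwise, choosing $f, g$ in distinct components, the Taylor syzygy $(g/\gcd(f,g))\,e_f - (f/\gcd(f,g))\,e_g$ would be a non-linear syzygy of $I$ that could not be written as an $S$-linear combination of linear Taylor syzygies, each of which only relates two generators within a single component, contradicting the linearity of the minimal relations. Applying this to $x_i^d$ and $x_j^d$, one obtains a path $x_i^d = f_0, f_1, \ldots, f_r = x_j^d$ of minimal generators with consecutive pairs of $\gcd$-degree $d-1$. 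An induction on $k$ using the variable-swap relation of the previous paragraph shows that every variable occurring in any $f_k$ is a unit multiple of $x_i$ in $T$; in particular $x_j$ is. Hence $\mm T = (x_i)T$ is principal, $T$ is a DVR and therefore a complete intersection, and the claimed equality follows from Theorem~\ref{binom}.

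The principal obstacle is the combinatorial connectedness of the Taylor graph under the linear-syzygies hypothesis, together with the inductive bookkeeping tracking how variable equivalences propagate along the resulting path; both are standard features of the theory of monomial ideals with linear syzygies but must be verified with care.
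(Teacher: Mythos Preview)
Your argument is correct and follows the same overall plan as the paper: verify the hypotheses of Theorem~\ref{binom}, deduce from $\ell(I)=n$ and Theorem~\ref{main} that $\Kp(I)=n$, and then prove the \emph{equality} case by showing that $T=R(I)_{\mm R(I)}$ is a discrete valuation ring, i.e.\ that all the $x_i$ become unit multiples of one another in $T$.

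The difference lies only in how this last point is established. You argue combinatorially: linear syzygies of a monomial ideal generated in degree $d$ are $K$-linear combinations of the binomial relations $x_b e_f-x_a e_g$ with $\deg\gcd(f,g)=d-1$; if the associated ``Taylor graph'' were disconnected, the Koszul syzygy between two pure powers in different components could not lie in the span of such edge-relations, contradicting linearity; hence there is a path from $x_i^d$ to $x_j^d$, and propagating the unit-multiple relation along it gives $x_j\in x_iT^{\times}$. The paper bypasses the graph entirely by a multidegree trick: it writes the single syzygy $x_i^d e_1-x_1^d e_i$ as a multihomogeneous $S$-combination of linear relations, observes that each summand then has multidegree $x_1^d x_i^d$, which forces the two variables appearing in each linear relation to be exactly $\{x_1,x_i\}$, and picks one such relation to conclude directly that $x_1$ and $x_i$ differ by a unit in $T$. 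The paper's route is shorter and avoids the connectedness and path-tracking bookkeeping; yours is a legitimate alternative that makes the combinatorics of the linear-syzygy hypothesis more transparent.
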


   \begin{proof}
      Let $I=(u_1,\ldots,u_m)$ be the monomial generators of $I$, each of degree $d$. Since they are all of same degree,
      it follows that $R(I)/\mm R(I)\iso K[u_1,\ldots,u_m]$. In particular, $R(I)/\mm R(I)$ is a domain. We denote the prime ideal
      $\mm R(I)$ by $P$, and show that $R(I)_P$ is a discrete valuation ring. Then it follows that $\height P=1$, so that $\ell(I)=n$,
      and Theorem~\ref{binom} yields the desired equations.

      In order to prove that $R(I)_P$ is a discrete valuation ring, it suffices to show that $PR(I)_P$ is generated by one element.
      Let $\xb = (x_1, \ldots, x_n)$ be a regular system of parameters in case $S$ is a regular local ring and the sequence of 
      indeterminates in case $S$ is a polynomial ring. Observe, that $(x_1,\ldots,x_n)R(I)_P = PR(I)_P$. We will show that each 
      $x_i$ differs from $x_1$ only a by unit, form which the desired conclusion will follow.

      Since $\dim S/I=0$, we have that $x_i^d\in I$ for $i=1,\ldots, n$. Let $F$ be the free $S$-module with basis $e_1,\ldots,e_m$
      and let $\epsilon \:\ F\to I$ the $S$-module epimorphism with $\epsilon(e_i)=u_i$ for $i=1,\ldots,m$. Let $i$ be an integer
      with $1<i\leq m$. Since $I$ has linear relations, the relation $x_i^de_1=x_1^de_i$ can be expressed as a multihomogeneous linear
      combination of linear relations, namely
      \[
          x_i^de_1-x_1^de_i=\sum_jv_jr_j,
      \]
      with $v_j$  monomials and  relations  $r_j=x_{j_k}e_{j_k}-x_{j_l}e_{j_l}$, and where the multidegree of each summand is  equal
      to the multidegree $x_1^dx_i^d$. It follows that $\{x_{j_k},x_{j_l}\}=\{x_1,x_i\}$ for all $j$. We choose one of the relations
      $r_j$ in this sum, and may assume that $r_j=x_1e_{j_k}-x_ie_{j_l}$. This relation gives rise to the equation
      $x_1(u_{j_k}t)=x_i(u_{j_l}t)$ in the Rees algebra $R(I)$. Since the elements $u_it$ do not belong to $P$, they become units in
      $R(I)_P$. Thus the preceding equation shows that $x_1$ and $x_i$ only differ by a unit $R(I)_P$, as desired.
   \end{proof}

   We note that the conclusion of Corollary \ref{satisfied} is valid in many
   cases that do not satisfy its assumptions.

   \begin{Example} \label{thirdexample}
    {\em   Let $I = (xy,vw,xz)$ then $\ell(I) = 3 = \apd(I)$ and
      $\Pp_1(I)(k) = \frac{1}{2}k^2+\frac{3}{2}+1$, $\Pp_2(I)(k) = k^2+2k$
      and $\Pp_3(I)(k) = \frac{1}{2} k^2+\frac{1}{2} k$. Thus
      $\Kp(I) = 3$ and $k_1 = 1 = {\Kp(I) -1 \choose 0}$, $k_2 = 2
      = { \Kp(I)-1 \choose 2}$ and $k_3 = 1 = {\Kp(I) -1 \choose 2}$.
      But $I$ does not have linear relations by $\beta_{2,4}(S/I) = 1$.}
   \end{Example}

\section{Roots of Polynomials}

    Before we can prove Theorem \ref{limittheorem} we need a technical lemma.
    A similar lemma, albeit for polynomials with a different structure,
    appears in \cite{BW} in another context.  

    \begin{Lemma} \label{technicallemma}
      Let $(f_k(t))_{k \geq 1}$ be a sequence of real polynomials of degree
      $\leq q-1$ and $f(t)$ a non-zero real polynomial of degree $q-1$. Assume that
      all $(f_k(t))_{k \geq 1}$  and $f(t)$ have non-negative
      coefficients. Let $\ell$ be a natural number such that:
      \begin{itemize}
         \item[$\triangleright$] $\lim_{k \rightarrow \infty} f_k(t)/k^\ell =
                                  0$, where the limit is taken in $\RR^{q}$.
      \end{itemize}
      Let $\alpha_1, \ldots, \alpha_{q-1}$ be the roots of $f(t)$.
      Then there are sequences $(\gamma_i^{(k)})_{k \geq 1}$, $1 \leq i \leq q$ of complex numbers such that:
      \begin{itemize}
        \item[(i)] $\displaystyle{\prod_{i = 1}^{q} (t-\gamma_i^{(k)}) = f_k(t) + k^\ell f(t) +t^q}$.
        \item[(ii)] $\gamma_i^{(k)} \rightarrow \alpha_i$, $1 \leq i \leq q-1$, for $k \rightarrow \infty$.
        \item[(iii)] $\gamma_q^{(k)}$ is real for $k \gg 0$ and $\gamma_{q}^{(k)} \rightarrow -\infty$ for
               $k \rightarrow \infty$.
        \end{itemize}
    \end{Lemma}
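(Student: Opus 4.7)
The plan is to set $g_k(t) := f_k(t) + k^\ell f(t) + t^q$, which is monic of degree $q$, and take $\gamma_1^{(k)},\dots,\gamma_q^{(k)}$ to be its complex roots (with multiplicity). Conclusion (i) is then built in, and the task reduces to analyzing the roots in two regimes: $q-1$ roots that remain bounded and converge to the $\alpha_i$'s, and one that escapes along the negative real axis.

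For the bounded roots, divide by $k^\ell$:
\[
   \frac{g_k(t)}{k^\ell} \;=\; f(t) + \frac{f_k(t)}{k^\ell} + \frac{t^q}{k^\ell}.
\]
The coefficients of $f_k(t)/k^\ell$ tend to $0$ by hypothesis, and those of $t^q/k^\ell$ tend to $0$ (the relevant case being $\ell\geq 1$), so the right-hand side converges to $f(t)$ uniformly on compacta. Choose pairwise disjoint closed discs in $\CC$ about the distinct roots of $f$, each small enough to exclude all other roots. Hurwitz's theorem, applied to $g_k(t)/k^\ell \to f(t)$, then furnishes $q-1$ zeros of $g_k$ (counted with multiplicity) clustering at the roots of $f$ with matching multiplicities. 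Label these $\gamma_1^{(k)},\dots,\gamma_{q-1}^{(k)}$ compatibly with complex conjugation so that $\gamma_i^{(k)}\to\alpha_i$.

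For the escaping root, let $c>0$ be the coefficient of $t^{q-1}$ in $f$, positive because the coefficients of $f$ are non-negative and $\deg f = q-1$. Rescale via $t = k^\ell s$:
\[
   \tilde g_k(s) \;:=\; \frac{g_k(k^\ell s)}{k^{\ell q}}.
\]
A short coefficient computation gives $\tilde g_k(s)\to s^q + c\,s^{q-1} = s^{q-1}(s+c)$: the $s^{q-1}$-coefficient tends to $c$, while the $s^i$-coefficient for $i<q-1$ is $O(k^{-\ell})$. Hurwitz then produces $q-1$ rescaled roots tending to $0$ and one tending to the simple root $-c$. Since the bounded roots already found are $o(k^\ell)$, after rescaling they also tend to $0$; by count they must coincide with the cluster at $0$, so the remaining root $\gamma_q^{(k)}$ is the one with $\gamma_q^{(k)}/k^\ell \to -c$, whence $\gamma_q^{(k)}\to -\infty$.

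Finally, for the reality of $\gamma_q^{(k)}$ when $k\gg 0$: $-c$ is a \emph{simple} real root of the limit polynomial and $\tilde g_k$ has real coefficients, so if $\gamma_q^{(k)}/k^\ell$ were non-real along a subsequence its complex conjugate would be a second root of $\tilde g_k$ clustering at $-c$, contradicting the Hurwitz count. The most delicate step is precisely this coordination between the bounded and the rescaled Hurwitz applications, together with the reality conclusion just sketched; once those are in place, (ii) and (iii) follow directly.
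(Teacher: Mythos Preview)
Your argument is correct. The overall architecture matches the paper's—split the roots into $q-1$ bounded ones tracking the $\alpha_i$ and one escaping root—but the tools you use differ in both halves. For the bounded roots the paper argues directly from the maximum principle to force at least one zero of $g_k$ into each small disc around an $\alpha_i$, and then handles multiplicities by an inductive passage to derivatives together with a citation of \cite[Theorem~3.2.4]{RahmanSchmiesser2002}; your single invocation of Hurwitz on $g_k/k^\ell\to f$ delivers the correct multiplicities at once and is cleaner. For the escaping root the paper uses the elementary observation that an unbounded coefficient forces an unbounded root, uniqueness of the unbounded root together with conjugate pairing forces reality, and non-negativity of the coefficients forces the sign; your rescaling $t=k^\ell s$ and second application of Hurwitz to $\tilde g_k\to s^{q-1}(s+c)$ is a bit heavier but yields the sharper asymptotic $\gamma_q^{(k)}\sim -c\,k^\ell$, and the simplicity of $-c$ gives reality by the same conjugate-pair count. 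Either route is fine; yours trades the paper's ad~hoc estimates for a uniform appeal to Hurwitz, at the cost of importing that theorem.
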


    \begin{proof}
       Consider a zero $\alpha_i$ of the polynomial $f(t)$. Let
       $\epsilon > 0$ be such that $f(t) \neq 0$ for $0 < |t - \alpha_i| < 2\epsilon$.
       Set $G_\epsilon^i = \{ t~|~|t - \alpha_i| \leq \epsilon \}$.
       We claim that for large enough $k$ the polynomial $f_k(t) + k^\ell f(t) +t^q$ has
       a zero in $G_\epsilon^i$.
       Assume not. Then we can find arbitrarily large $k$ for which $g_k(t) := f_k(t) + k^\ell f(t) +t^q$
       does not vanish in $G_\epsilon^i$. Then $1/g_k(t)$ is holomorphic inside $G_\epsilon^î$.
       By the maximum principle the maximum of $1/g_k(t)$ on $G_\epsilon^i$ is obtained
       on the boundary of $G_\epsilon^i$.
       In particular, this implies that there is a $t_0$ such that $|t_0 - \alpha_i| = \epsilon$ and
       $|1/g_k(t_0)| > |1/g_k(\alpha_i)|$. Hence $|g_k(\alpha_i)| > |g_k(t_0)|$. Thus

       \begin{eqnarray*}
         |f_k(\alpha_i) + \alpha_i^q| & > & |f_k(t_0) + k^\ell f(t_0) + t_0^q|
       \end{eqnarray*}

       This implies
       \begin{eqnarray*}
         1/k^\ell |f_k(\alpha_i)| + 1/k^\ell |\alpha_i|^q & > & |1/k^\ell f_k(t_0) + f(t_0) + 1/k^\ell t_0^q|
       \end{eqnarray*}

       Since by assumption the left hand side converges to $0$ for $k \rightarrow \infty$
       and the right hand side to $|f(t_0)| > 0$ we obtain a contradiction.
       Hence there is a zero of $f_k(t) + k^\ell f(t) +t^q$ in $G_\epsilon^i$ for large $k$.

       Now we choose $\epsilon$ small enough so that the $G_\epsilon^i$, $1 \leq i \leq q-1$,
       are pairwise disjoint.
       In this situation and for large enough $k$ we
       denote by $\gamma_i^{(k)}$ the zero of $f_k(t) + k^\ell f(t) +t^q$ in the disk $G_\epsilon^i$
       around $\alpha_i$ with radius $\epsilon$.
       Then as $\epsilon$ goes to $0$ the root $\gamma_i^{(k)}$ converges to $\alpha_i$, $1 \leq i \leq q-1$.
       Since for $k \rightarrow \infty$ at least one coefficient of $g_k(t)$ goes to
       infinity there must be at least one root with modulus going to infinity. We call this
       root $\gamma_q^{(k)}$.

       The argumentation so far shows that for each distinct root of $f(t)$ there is a sequence
       of roots of $f_k(t) + k^\ell f(t) + t^q$ converging to the root. We are left with
       studying multiple roots. Assume $\alpha$ is an $r$-fold root of $f(t)$ for
       some $r \geq 2$. In this case $\alpha$ is also a root of
       $k^\ell \frac{\partial^i}{\partial^i t} f(t)$ for $0 \leq i \leq r-1$. Consider the
       polynomial
       $$ \frac{1}{q}
          \left( 
            k^\ell \frac{\partial}{\partial t} f(t) +
            \frac{\partial}{\partial t} f_k(t)+
            q(q-1)\cdots (q-i) t^{q-1} \right).$$
       By induction on $r$ we obtain that this polynomial has $r-1$ roots converging to $\alpha$ as $k$ 
       goes to infinity. 
       Now the assertion follows by \cite[Theorem 3.2.4]{RahmanSchmiesser2002}.

       Since by assumption at least one of the coefficients of $f_k(t) + k^\ell f(t) + t^q$ is
       unbounded and there are $q-1$ bounded roots it follows that there must be a $q$-th root
       that is unbounded. Since $k^\ell f(t) + f_k(t)+t^q$ has real coefficients all
       roots in $\CC \setminus \RR$ come in conjugate pairs. Since there is a unique unbounded
       root it follows that the root is real for large enough $k$. By the property that
       $f_k(t) + k^\ell f(t) + t^q$ has only non-negative coefficients it follows that
       all real roots are non-positive, hence the unbounded roots must go to $-\infty$ as
       $k \rightarrow \infty$. 
    \end{proof}

    \begin{proof}[Proof of Theorem \ref{limittheorem}]
       The assertion follows directly from Lemma \ref{technicallemma} and
       Remark \ref{minusone} if we set $q = \apd(I)$, $f(t) =
       \frac{1}{(\ell(I)-1)!} \sum_{i=1}^{\Kp(I)} k_i t^{\apd(I)-i}$ and
       $f_k(t) = \Pp(I)(k,t) - f(t) -t^{\apd(I)}$.
    \end{proof}

    A sequence $a_0 , \ldots , a_q$ of real numbers is called log-concave if $a_i^2 \geq a_{i-1} a_{i+1}$
    for $1 \leq i \leq q-1$. We say that a non-necessarily log-concave sequence
    $a_0 , \ldots , a_q$ is strictly log-concave at $i$ if  $a_i^2 > a_{i-1} a_{i+1}$.
    Log-concavity of a sequence of strictly positive numbers $a_0 , \ldots , a_q$ implies that the
    sequence is unimodal, i.e. there is an $i$ such that $a_0 \leq \cdots \leq a_i \geq \cdots \geq a_q$.
    This property is of interest in enumerative combinatorics and combinatorial commutative algebra. 
    In the sequel we want to exhibit some facts that allow to deduce partial or full unimodality of
    the sequence $\beta_0(S/I^k) , \ldots, \beta_{\apd(I)} (S/I^k)$ for large $k$.
  
    The next remark identifies situations when we can expect strict log-concavity. The part (i) is a trivial
    consequence of the definition and part (ii) is a well know fact about real rooted polynomials (see
    for example \cite{BellSkandera2007} and the references therein).

    \begin{Remark}
       \begin{itemize}
       {\em   \item[(i)]
           If $a_0, \ldots, a_q$ is a sequence of positive real numbers that is log-concave then
           there are numbers $0 \leq j_1 \leq j_2 \leq q$ such $a_0 < \cdots < a_{j_1} = \cdots = a_{j_2} > \cdots > a_q$.
           In particular, $a_0, \ldots, a_q$ is strictly log-concave at $i$
           for $1 \leq i \leq j_1$ and $j_2 \leq i \leq q-1$.
         \item[(ii)]
           If $a_0 + a_1t +\cdots + a_qt^q \in \RR[t]$ has only real roots then $a_0, \ldots, a_q$ is log-concave.}
       \end{itemize}
    \end{Remark}

    \begin{Corollary} \label{realrootlimit}
       Let $I$ be a (graded) ideal in $S$. Assume that the coefficient series of
       $\sum_{i=1}^{\Kp(I)} k_{i} \cdot t^{\apd(I)-i}$
       is strictly log-concave at $1 \leq i-1,i,i+1 \leq \apd(I) -2$.
       Then for large $k$ the sequence
       $\beta_0(S/I^k), \beta_1(S/I^k), \ldots, \beta_{\apd(I)} (S/I^k)$
       is strictly log-concave at $i$.
    \end{Corollary}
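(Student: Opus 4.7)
The plan is to reduce the corollary to a single leading-coefficient comparison between the Kodiyalam polynomials.

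First, by the definitions of the $\Pp_j(I)$ and Remark~\ref{projdim}, for $k\gg 0$ we have $\beta_j(S/I^k)=\Pp_j(I)(k)$ for all $j$. By Proposition~\ref{first} together with the definition of $\Kp(I)$, the polynomial $\Pp_j(I)(k)$ has degree exactly $\ell(I)-1$ with leading coefficient $k_j/(\ell(I)-1)!$ whenever $1\le j\le\Kp(I)$, and strictly smaller degree whenever $j>\Kp(I)$.

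Next I parse the hypothesis. Reading the coefficients of $\sum_{j=1}^{\Kp(I)}k_j t^{\apd(I)-j}$ in the same order used for the $\beta_j$'s in $\Pp(I)(k,t)$ and extending by zero outside $[1,\Kp(I)]$, strict log-concavity at position $j$ means $k_j^2>k_{j-1}k_{j+1}$. Imposed at each of $j\in\{i-1,i,i+1\}$, this forces each of $k_{i-1},k_i,k_{i+1}$ to be strictly positive, hence $i-1,i,i+1\in[1,\Kp(I)]$, and in particular $2\le i\le\Kp(I)-1$. Thus the three relevant Kodiyalam polynomials $\Pp_{i-1}(I)$, $\Pp_i(I)$, $\Pp_{i+1}(I)$ share the common top degree $\ell(I)-1$ with positive leading coefficients $k_{i-1}/(\ell(I)-1)!$, $k_i/(\ell(I)-1)!$, $k_{i+1}/(\ell(I)-1)!$.

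The heart of the argument is then the elementary expansion
\[
   \Pp_i(I)(k)^2-\Pp_{i-1}(I)(k)\,\Pp_{i+1}(I)(k)
   \;=\;\frac{k_i^2-k_{i-1}k_{i+1}}{((\ell(I)-1)!)^2}\,k^{2(\ell(I)-1)}+(\text{lower order in }k).
\]
Strict log-concavity at $i$ yields $k_i^2-k_{i-1}k_{i+1}>0$, so the right-hand side is eventually positive. Combined with $\beta_j(S/I^k)=\Pp_j(I)(k)$ for $k\gg 0$, this gives the strict log-concavity $\beta_i(S/I^k)^2>\beta_{i-1}(S/I^k)\,\beta_{i+1}(S/I^k)$ for large $k$.

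The main obstacle is not analytic but a bit of bookkeeping: one must extract from the three strict-log-concavity conditions the conclusion that $i$ lies in $[2,\Kp(I)-1]$, the range where all three Kodiyalam polynomials share their top degree. Once this range has been identified, only the middle condition $k_i^2>k_{i-1}k_{i+1}$ actually drives the conclusion; the conditions at $i-1$ and $i+1$ serve only to pin $i+1$ inside $[1,\Kp(I)]$ so that the leading-coefficient comparison makes sense.
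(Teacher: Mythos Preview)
Your argument is correct and considerably more direct than the route taken in the paper. The paper's proof invokes Theorem~\ref{limittheorem} to control the roots of $\Pp(I)(k,t)$: it factors out the real root $\gamma_k^{(\apd(I))}$ that escapes to $-\infty$, observes that the remaining factor $b_k(t)$ has coefficients converging (up to scale) to the $k_i$, so that strict log-concavity at $i-1,i,i+1$ passes to the coefficients $c_j$ of $b_k(t)$ for large $k$, and then checks by an explicit expansion of $(\gamma c_j+c_{j-1})^2-(\gamma c_{j+1}+c_j)(\gamma c_{j-1}+c_{j-2})$ that multiplying back by the linear factor preserves strict log-concavity. All three neighbouring conditions are genuinely used in that last computation.

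You bypass the root analysis entirely: once the hypothesis forces $i-1,i,i+1$ to lie in $[1,\Kp(I)]$, the three Kodiyalam polynomials share top degree $\ell(I)-1$, and the sign of $\Pp_i(I)(k)^2-\Pp_{i-1}(I)(k)\Pp_{i+1}(I)(k)$ for large $k$ is governed by the single number $k_i^2-k_{i-1}k_{i+1}$. This is more elementary and, as you remark, shows that the flanking conditions at $i\pm1$ play only a bookkeeping role. In fact your argument yields a bit more than the stated corollary: even without the condition at $i+1$, if $i+1>\Kp(I)$ then $\deg\Pp_{i+1}(I)<\ell(I)-1$ and the product $\Pp_{i-1}(I)\Pp_{i+1}(I)$ already has degree below $2(\ell(I)-1)$, so the leading term $k_i^2/((\ell(I)-1)!)^2$ still dominates. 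Thus strict log-concavity of the $k_j$ at $i$ alone (together with $i\geq 2$) suffices for the conclusion. The paper's approach, by contrast, buys an illustration of how the limit theorem governs the asymptotic shape of $\Pp(I)(k,t)$, at the cost of a longer argument for this particular corollary.
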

    \begin{proof}
       Using the notation from Theorem \ref{limittheorem} we set
       $$b_k(t) = \frac{1}{(t-\gamma_k^{(\apd(I))})} \Pp(I)(k,t)$$
       and $q = \apd(I)$.
       Then $b_k(t)$ has roots converging to the roots of $\sum_{i=1}^{\Kp(I)} k_{i} \cdot t^{\apd(I)-i}$. Thus up to a constant factor
       the coefficients of
       $b_k(t)$ converge to the coefficients of $\sum_{i=1}^{\Kp(I)} k_i t^{q-i}$. Since the coefficients are continuous in terms of
       roots this implies that the coefficient sequence of $b_k(t)$ is strictly log-concave for large $k$ at $i-1,i$ and $i+1$.
       Now $\Pp(I)(k,t) = \sum_{i=0}^q \Pp_i(I)(k) t^{q-k}$ is obtained from $b_k(t)$ by multiplication with $(t-\gamma_k^{(q)})$.
       Set $\gamma := -\gamma_k^{(q)}$ and write $b_k(t) = c_0 + \cdots + c_{q-2}t^{q-2} +c_{q-1}t^{q-1}$, where
       $c_{q-1} = 1$. If $k$ is large enough and we set $c_{-1} = c_{q} = 0$ then
       $\beta_{q-i}(S/I^k) = \gamma c_i + c_{i-1}$ for $0 \leq i \leq q$.
       Hence strict log-concavity at $i-1,i$ and $i+1$ for large $k$ implies::
       \begin{eqnarray*}
         \beta_{q-i}(S/I^k)^2-\beta_{q-i-1}(S/I^k)\cdot \beta_{q-i+1}(S/I^k)
                                     & = & ( \gamma c_i + c_{i-1})^2- \\
                                     &   & ( \gamma c_{i-1} + c_{i-2})( \gamma c_{i+1} + c_i) \\
                                     & = & \gamma^2( c_i^2-c_{i-1}c_{i+1}) + \\
                                     &   & \gamma (c_{i-1}c_i-c_{i-2}c_{i+1}) + c_{i-1}^2- c_{i-2}c_i \\
                                     & > & \gamma (  c_{i-1}c_i-c_{i-2}c_{i+1})
       \end{eqnarray*}

       Multiplying $c_{i-1}c_i-c_{i-2}c_{i+1}$ by $c_{i-1}c_i$ we obtain $c_{i-1}^2c_i^2-c_{i-2}c_{i-1}c_ic_{i+1}$.
       Again from strict log-concavity we know that $c_{i-1}^2 > c_{i-2}c_i$ and $c_i^2 > c_{i-1}c_{i+1}$. Since the
       coefficients of $b_k(t)$ are positive as they are up to a constant close to the coefficients of
       $\sum_{i=1}^{\Kp(I)} k_{i} \cdot t^{q-i}$
       it follows that $c_{i-1}^2c_i^2-c_{i-2}c_{i-1}c_ic_{i+1} > 0$ and hence $c_{i-1}c_i-c_{i-2}c_{i+1} > 0$.
    \end{proof}

    \begin{figure}[htbp]
      \centering
      \fbox{
        \includegraphics[width=0.7\textwidth]{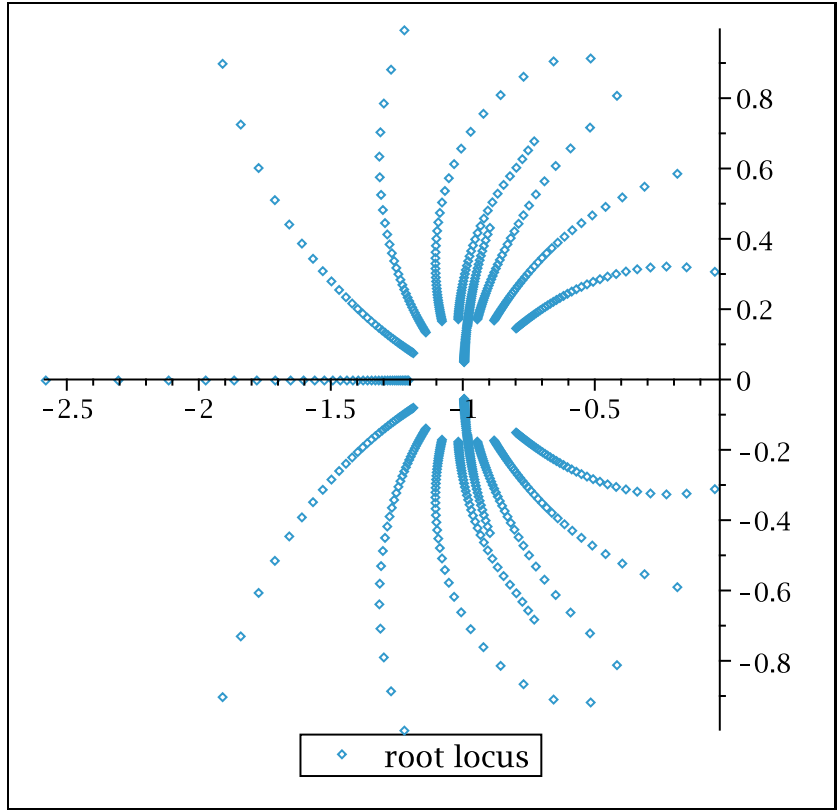}
      }
      \caption{Root loci for Example \ref{maximal} and parameters $n = 20$, $k \in \{1, \ldots, 40\}$}
      \label{roots}
    \end{figure}

    \begin{Example} \label{maximal}
     {\em  Let $I$ be generated by a regular sequence of length $n$. By using the Eagon-Northcott complex we see 
       that $\beta_i(S/I^k) = {k+n-1 \choose n-i}{k-2+i \choose i-1}$ for $1 \leq i \leq \apd(I) = n$.
       Thus
       $$\sum_{i=0}^q \Pp_i(I)(k) t^{n-k} = t^n + \sum_{i=1}^{n} {k+n-1 \choose n-i}{k-2+i \choose i-1} t^{n-i} .$$
      In particular,
      \begin{eqnarray*}
         k_i & = & \frac{(n-1)!}{(n-i)!(i-1)!} \\
      \end{eqnarray*}
      and therefore
      \begin{eqnarray*}
         \sum_{i)=1}^n k_i t^{n-i} & = & \sum_{i=1}^n \frac{(n-1)!}{(n-i)!(i-1)!} t^{n-i} \\
                  & = & \sum_{i=0}^{n-1} {n-1 \choose i} t^{n-1-i} \\
                 & = & \frac{1}{(n-1)!} (1+t)^{n-1}
      \end{eqnarray*}
      Indeed this calculation is predicted by Corollary \ref{satisfied} when $I$ is the maximal (graded)
      ideal in a polynomial ring. The calculation implies that 
      all $\alpha_i$ from Theorem \ref{limittheorem} are equal to $-1$ and the coefficient series is
      the sequence of binomial coefficients which is strictly log-concave.
      Hence Corollary \ref{realrootlimit} applies. Thus for large $k$ the sequence $\beta_0(S/I^k),
      \ldots, \beta_{n} (S/I^k)$ is strictly log-concave and hence unimodal.
      Clearly, this consequences of Corollary \ref{realrootlimit} can also be easily checked by inspection
      of the sequence $\beta_0(S/I^k) = 1$, $\beta_i(S/\mm^k) = {k+n-1 \choose n-i}{k-2+i \choose i-1}$,
      $1 \leq i \leq n$ in this case. This example also shows that the fact that all roots of 
      $\sum_{i)=1}^{\apd(I)} k_i t^{\apd(I)-i}$ are real does not force the
      roots of $\sum_{i=0}^n \Pp_i(I)(k) t^{n-i}$ to be real for large $k$. Indeed, one can check that
      no root except for the two roots forced by Theorem \ref{limittheorem} and depending on the parity
      of $n$ one additional root of $\sum_{i=0}^q \Pp_i(I)(k) t^{q-k}$ are real.
      In Figure \ref{roots} we have depicted the roots for $n = 20$ and $k$ from $1$ to $40$ in this
      example with the imaginary axis being vertical and the real axis being horizontal. Indeed, the real root
      going to $-\infty$ is only seen for small $k$ as it leaves the axis range already for small values of $k$.
      One easily recognizes the root curves converging to $-1$ in conjugate pairs. }
   \end{Example}

   Following the same argumentation as in Example \ref{maximal} we deduce from 
   Corollary \ref{satisfied} and Corollary \ref{realrootlimit} 
   the last result of this paper.

   \begin{Corollary} \label{last}
      Let $I\subset S$ be a monomial ideal generated in a single degree with $\dim S/I=0$.  Suppose  that $I$ has linear relations.
      Then for large $k$ the sequence $\beta_0(S/I^k),
      \ldots, \beta_{n} (S/I^k)$ is strictly log-concave and hence strictly unimodal.
   \end{Corollary}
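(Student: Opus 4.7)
The strategy is to transplant the argument from Example \ref{maximal} into the monomial setting, using Corollary \ref{satisfied} in place of the Eagon--Northcott computation that was available there. All the pieces are already in place; the main content of the corollary is to observe that the hypotheses of Corollary \ref{realrootlimit} are satisfied.

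First I would invoke Corollary \ref{satisfied}: under the assumptions of the statement ($I$ monomial, generated in a single degree, $\dim S/I = 0$, with linear relations), the proof of that corollary in fact gives $\ell(I) = n$, hence $\apd(I) = n$ and $\Kp(I) = n$ by Theorem \ref{main}, together with the exact equalities
\[
   k_i = k_1 \binom{n-1}{i-1} \qquad \text{for } i = 1,\ldots,n.
\]

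Next I would identify the key auxiliary polynomial. Summing the above,
\[
   \sum_{i=1}^{\Kp(I)} k_i \cdot t^{\apd(I)-i}
   \;=\; k_1 \sum_{i=1}^{n} \binom{n-1}{i-1} t^{n-i}
   \;=\; k_1 (1+t)^{n-1},
\]
whose coefficient series is, up to the positive scalar $k_1$, the row $\binom{n-1}{0},\binom{n-1}{1},\ldots,\binom{n-1}{n-1}$ of Pascal's triangle. This sequence is strictly log-concave (for instance because $(1+t)^{n-1}$ is real-rooted, cf.\ the second part of the Remark preceding Corollary \ref{realrootlimit}), and in fact $\binom{n-1}{j}^2 > \binom{n-1}{j-1}\binom{n-1}{j+1}$ at every interior index.

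I would then apply Corollary \ref{realrootlimit} at every eligible $i$: the strict log-concavity of the binomial coefficient sequence transfers, for $k \gg 0$, to strict log-concavity of $\beta_0(S/I^k),\ldots,\beta_n(S/I^k)$, exactly as carried out for the regular sequence case in Example \ref{maximal}. Since $\beta_0(S/I^k) = 1$ and the remaining $\beta_i(S/I^k)$ are positive (as $\projdim S/I^k = n$ for $k \gg 0$), strict log-concavity of a positive sequence yields strict unimodality by part (i) of the Remark cited above.

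I do not anticipate any real obstacle: Corollary \ref{satisfied} reduces the problem to the same coefficient profile as the complete intersection case of Example \ref{maximal}, and the mechanical implication (log-concave coefficients $\Rightarrow$ log-concave Betti numbers) was proved in Corollary \ref{realrootlimit}. The only delicate point is checking that the indices at the boundary are covered by Corollary \ref{realrootlimit}; this was already handled in Example \ref{maximal}, so the same verification transfers verbatim.
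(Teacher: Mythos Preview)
Your proposal is correct and follows exactly the approach the paper indicates: invoke Corollary~\ref{satisfied} to obtain $k_i/k_1=\binom{n-1}{i-1}$ (and $\ell(I)=\apd(I)=\Kp(I)=n$), recognize $\sum_i k_i t^{n-i}$ as $k_1(1+t)^{n-1}$ with strictly log-concave coefficients, and then apply Corollary~\ref{realrootlimit} as in Example~\ref{maximal}. The paper's own proof is precisely the one-line remark that the argument of Example~\ref{maximal} transfers via Corollary~\ref{satisfied} and Corollary~\ref{realrootlimit}, so you have simply spelled out what the paper leaves implicit.
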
 

   We do not know any ideal $I$ for which the conclusion of Corollary \ref{last} does not hold. But we do not
   see enough evidence to formulate a conjecture.

\end{document}